\let\@fnsymbol\@arabic\makeatother
\definecolor{marin}{rgb}{0.,0.3,0.7}
\newcommand{\al}{\alpha}
\newcommand{\be}{\beta}
\newcommand{\si}{\sigma}
\newcommand{\ta}{\tau}
\newcommand{\om}{\omega}
\newcommand{\Om}{\Omega}
\newcommand{\Z}{\mathbb{Z}}
\newcommand{\R}{\mathbb{R}}
\newcommand{\C}{\mathbb{C}}
\newcommand{\TT}{\mathbb{T}}
\providecommand{\abs}[1]{\lvert#1\rvert}
\providecommand{\absbig}[1]{\bigl\lvert#1\bigr\rvert}
\providecommand{\absBig}[1]{\Bigl\lvert#1\Bigr\rvert}
\providecommand{\absbigg}[1]{\biggl\lvert#1\biggr\rvert}
\providecommand{\norm}[1]{\lVert#1\rVert}
\providecommand{\normbig}[1]{\bigl\lVert#1\bigr\rVert}
\providecommand{\normbigg}[1]{\biggl\lVert#1\biggr\rVert}
\providecommand{\normv}[1]{\ensuremath{{\lVert\hskip-1pt\lvert}#1{\rvert\hskip-1pt\rVert}}}
\providecommand{\normvbig}[1]{\ensuremath{{\bigl\lVert\hskip-1pt\bigl\lvert}#1{\bigr\rvert\hskip-1pt\bigr\rVert}}}
\providecommand{\skla}[1]{\langle#1\rangle}
\providecommand{\klabig}[1]{\bigl(#1\bigr)}
\providecommand{\klaBig}[1]{\Bigl(#1\Bigr)}
\providecommand{\klabigg}[1]{\biggl(#1\biggr)}
\newcommand{\formulatext}[1]{\qquad\text{#1}\qquad}
\newcommand\myfor{\formulatext{for}}
\newcommand\myand{\formulatext{and}}
\newcommand\myif{\formulatext{if}}
\newcommand\with{\formulatext{with}}
\newcommand{\sfrac}[2]{\mbox{$\textstyle\frac{#1}{#2}$}}
\newcommand{\iu}{\mathrm{i}}
\newcommand{\e}{\mathrm{e}}
\newcommand{\drm}{\mathrm{d}}
\DeclareMathOperator{\diag}{diag}
\DeclareMathOperator{\sinc}{sinc}
\DeclareMathOperator{\Id}{Id}
\newcommand{\conv}{\mathbin{\ast}}
\newtheorem{theorem}{Theorem}[section]
\newtheorem{lemma}[theorem]{Lemma}
\newtheorem{proposition}[theorem]{Proposition}
\theoremstyle{definition}
\newtheorem{assum}{Assumption}
\newcommand{\disc}{\mathcal{K}}
\newcommand{\meth}{\mathcal{S}}
\newcommand{\deltax}{{\Delta x}}
\title{Error analysis of trigonometric integrators\\ for semilinear wave equations}
\author{Ludwig Gauckler\,\thanks{Institut f\"ur Mathematik,
          Technische Universit\"at Berlin,
          Stra{\ss}e des 17.\ Juni 136,
          D-10623 Berlin, Germany
          ({\tt gauckler@math.tu-berlin.de}).}
}
\date{Version of 27 January 2015}
\begin{document}

\maketitle

\begin{abstract}
An error analysis of trigonometric integrators (or exponential integrators) applied to spatial semi-discretizations of semilinear wave equations with periodic boundary conditions in one space dimension is given. In particular, optimal second-order convergence is shown requiring only that the exact solution is of finite energy. The analysis is uniform in the spatial discretization parameter. It covers the impulse method which coincides with the method of Deuflhard and the mollified impulse method of Garc\'{i}a-Archilla, Sanz-Serna \& Skeel as well as the trigonometric methods proposed by Hairer \& Lubich and by Grimm \& Hochbruck. 
The analysis can also be used to explain the convergence behaviour of the St\"ormer--Verlet/leapfrog discretization in time.\\[1.5ex]
\textbf{Mathematics Subject Classification (2010):} 
65M15, 
65P10, 
65L70, 
65M20.\!\\[1.5ex] 
\textbf{Keywords:} Nonlinear wave equation, semilinear wave equation, trigonometric integrators, exponential integrators, St\"ormer--Verlet method, leapfrog method, error bounds.
\end{abstract}

\section{Introduction}

We consider, for some integer $p\ge 2$, the semilinear wave equation 
\begin{equation}\label{eq-nlw}
u_{tt} = u_{xx} + u^p, \qquad u = u(x,t)
\end{equation}
with $2\pi$-periodic boundary conditions in one space dimension ($x\in\TT=\R/(2\pi\Z)$). Denoting by $H^s$ the Sobolev space $H^s(\TT)$, we equip this equation with initial values
\begin{equation}\label{eq-nlw-init}
u(\cdot,t_0) \in H^{s+1} \myand u_t(\cdot,t_0) \in H^s \myfor s\ge 0.
\end{equation}
We are in particular interested in the case $s=0$, where the energy is finite.

After a semi-discretization in space, this nonlinear wave equation becomes a huge system of ordinary differential equations of the form
\begin{equation}\label{eq-nlw-semi-temp}
\ddot{y} = - \Om^2 y + f(y), \qquad y = y(t)
\end{equation}
with a matrix $-\Om^2$ describing the discretized second spatial derivative in~\eqref{eq-nlw} and a nonlinearity $f(y)$ describing the polynomial nonlinearity in~\eqref{eq-nlw}. The eigenvalues of the matrix $-\Om^2$, i.e., the eigenvalues of the discretized Laplace operator, range from order one to the order of the spatial discretization parameter. 
The spatial discretization parameter is typically large, in particular for initial values of low regularity, such as \eqref{eq-nlw-init} with $s=0$, for which a very large spatial discretization parameter compensates for the slow convergence of the semi-discretization in space, see \cite{Gauckler} for the case of a spectral semi-discretization in space. 
The spatial semi-discretization thus exhibits a variety of oscillations, ranging from low to high oscillations. 

For the discretization in time of oscillatory systems of the form~\eqref{eq-nlw-semi-temp}, the use of a \emph{trigonometric integrator} (or \emph{exponential integrator}) is increasingly popular. See, for instance, \cite[Chapter~XIII]{Hairer2006} and the recent review \cite{Hochbruck2010}. These integrators are especially designed to deal with the matrix $\Om$ and the induced high oscillations. There are several papers that consider trigonometric integrators when applied to wave equations. In \cite{Cano2013,Cohen2008a}, the long-time behaviour of these methods with respect to conserved or almost conserved quantities is studied. Moreover, the methods are extended to higher order in \cite{Cano2010a,Cano2013a} and to the linear stochastic wave equation in \cite{Cohen2013}. 

To our knowledge, however, there is no rigorous error analysis of trigonometric integrators applied to spatial semi-discretizations of nonlinear wave equations such as~\eqref{eq-nlw} yet, for example for initial values of finite energy, that is~\eqref{eq-nlw-init} with $s=0$. The main challenge are error bounds that are uniform in the large frequencies and the size of the system, and hence in the spatial discretization parameter, and that allow for initial values of low regularity, such as~\eqref{eq-nlw-init} with $s=0$. 

In the present paper we prove such error bounds of trigonometric integrators applied to a spectral semi-discretization in space. We consider in particular initial values of finite energy and exact solutions $(y,\dot{y})$ of the spatial semi-discretization~\eqref{eq-nlw-semi-temp} in a discrete counterpart of $H^1\times H^0=H^1\times L_2$. Under such low regularity assumptions, we show, amongst others, second-order convergence of $(y,\dot{y})$ in $H^0\times H^{-1}$ and first-order convergence in $H^1\times H^0$. The analysis covers the impulse method \cite{Grubmueller1991,Tuckerman1992} which coincides in our situation with the method of Deuflhard \cite{Deuflhard1979} and the mollified impulse method of Garc\'{i}a-Archilla, Sanz-Serna \& Skeel \cite{GarciaArchilla1999} as well as the trigonometric methods proposed by Hairer \& Lubich \cite{Hairer2000} and by Grimm \& Hochbruck \cite{Grimm2006}. 

We mention that there are many papers that study the error of various instances of trigonometric integrators when applied to systems of the form~\eqref{eq-nlw-semi-temp}, see \cite{Cano2010a,Cano2013a,Dong2014,GarciaArchilla1999,Grimm2005,Grimm2006a,Grimm2006,Hairer2006,Hochbruck1999}. In all these works, it is assumed that the nonlinearity is in particular Lipschitz continuous. This, however, is \emph{not} the case for the nonlinear wave equation~\eqref{eq-nlw} or typical spatial semi-discretizations thereof, for example when considered in the space $H^0=L_2$ which is the natural space to prove error estimates for initial values of finite energy. An exception is the Sine--Gordon equation $u_{tt}=u_{xx}-\sin(u)$, whose nonlinearity is indeed Lipschitz continuous in $H^0=L_2$, and for which Gautschi-type trigonometric integrators have been analysed in \cite{Grimm2006a}. Another way to avoid the non-Lipschitz continuous nonlinearity is to consider equations such as~\eqref{eq-nlw} in higher order Sobolev spaces, where the nonlinearity is locally Lipschitz continuous, and where second-order error bounds can then be shown under correspondingly higher regularity assumptions on the exact solution, see \cite{Dong2014} and also \cite[Chapter IV]{Faou2012}. 

Yet another way to deal with the non-Lipschitz nonlinearity is to impose additional assumptions on the numerical solution, such as bounds in $L_\infty$ that are uniform in the time step-size. The validity of such an assumption on the numerical solution is at first not clear, however. Under such an unclear assumption, the aforementioned previous results and their proofs would also hold for non-Lipschitz nonlinearities such as $u^p$. In the present paper, we use an analysis that proves such properties of the numerical solution, notably without requiring higher regularity of the exact solution. 
This is done by exploiting the full scale of Sobolev spaces, including Sobolev spaces of negative order. More precisely, the error analysis is performed in two stages. First, a low order error bound is shown in a higher order Sobolev space (or its discrete counterpart), where the nonlinearity is, at least locally, Lipschitz continuous. From this low order error bound, a suitable regularity of the numerical solution is deduced. This regularity is then used in the second stage to overcome the lack of Lipschitz continuity in lower order Sobolev spaces and allows us to show higher order error bounds in these spaces. Such kinds of two-stage arguments have been used previously, for example in \cite{Lubich2008a,Koch2011,Gauckler2011,Thalhammer2012a} for discretizations of nonlinear Schr\"odinger equations and in \cite{Gottlieb2012,Holden2013} for discretizations of equations with Burgers nonlinearity. 

Surprisingly, it is possible to do the error analyses in both stages following the traditional argument of error accumulation in Lady Windermere's fan. This is in striking contrast to previous error analyses of trigonometric integrators given for different situations in \cite{GarciaArchilla1999,Grimm2005,Grimm2006a,Grimm2006,Hochbruck1999}, where cancellation effects in the accumulation of errors are of vital importance. 
In the case of the nonlinear wave equation, not only a conceptually different proof is possible, but also less restrictive assumptions on the filter functions that characterize the trigonometric integrator in a one-step formulation are needed. Therefore, a considerably larger class of trigonometric integrators in one-step formulation is covered by the presented analysis, in particular methods that do not use a filter inside the nonlinearity.

The paper is organised as follows. In Section~\ref{sec-method}, the considered discretization is introduced, the error bounds are stated and numerical experiments are presented. The proof of the error bounds is given in Section~\ref{sec-proof}. The presented error analysis of trigonometric integrators is not restricted to the spectral semi-discretization in space of the nonlinear wave equation~\eqref{eq-nlw} with pure power nonlinearity. It applies equally to the spectral semi-discretization of nonlinear wave equations with general polynomial or analytic nonlinearities and to the spatial semi-discretization by finite differences, as is described in Section~\ref{sec-ext}. 
Moreover, the analysis can be extended to the widely used St\"ormer--Verlet/leapfrog discretization in time by interpreting this method as a trigonometric integrator with modified frequencies, which is also described in Section~\ref{sec-ext}.

\section{Numerical method and statement of the main result}\label{sec-method}

\subsection{Spectral semi-discretization in space}\label{subsec-collocation}

For the semi-discretization in space of the nonlinear wave equation~\eqref{eq-nlw}, we consider spectral collocation. The trigonometric polynomial
\begin{equation}\label{eq-ansatz}
u_{\disc}(x,t) = \sum_{j\in\disc} y_j(t) \e^{\iu jx} \with \disc = \bigl\{\, -K,\dots,K-1 \,\bigr\}
\end{equation}
defined by its Fourier coefficients $y_j(t)$ with indices $j$ from the finite index set $\disc$ is used as an ansatz for the solution of the nonlinear wave equation. Inserting this ansatz in the nonlinear wave equation and evaluating in the collocation points $x_k=\pi k/K$ with $k\in\disc$ then leads to the system 
\begin{equation}\label{eq-nlw-semi}
\ddot{y}(t) = -\Omega^2 y(t) + f\klabig{y(t)}
\end{equation}
for the vector $y(t) = (y_j(t))_{j\in\disc}$ of Fourier coefficients 
(the vector $y(t)$ belongs to the set $\C^\disc$ of complex vectors indexed by $\disc$). 
Here, $\Om$ is a nonnegative and diagonal matrix containing frequencies $\om_j$,
\[
\Om = \diag(\om_j)_{j\in\disc} \with \om_j = \abs{j},
\]
and the nonlinearity $f$ is given by the discrete convolution $\conv$,
\begin{equation}\label{eq-f}
f(y) = \underbrace{y \conv \dotsm \conv y}_{\text{$p$ times}} \with \klabig{y \conv z}_j = \sum_{k+l \equiv j \bmod{2K}} y_k z_l, \qquad j\in\disc.
\end{equation}
The initial values $y(t_0)$ and $\dot{y}(t_0)$ for~\eqref{eq-nlw-semi} are determined from the initial values $u(\cdot,t_0)$ and $u_t(\cdot,t_0)$ of the nonlinear wave equation~\eqref{eq-nlw} by
\begin{equation}\label{eq-nlw-semi-init1}
y_j(t_0) = \sum_{k\in\Z : k\equiv j \bmod{2K}} u_k(t_0), \qquad \dot{y}_j(t_0) = \sum_{k\in\Z : k\equiv j \bmod{2K}} \dot{u}_{k}(t_0), \qquad j\in\disc,
\end{equation}
where we denote by $u_k(t)$ and $\dot{u}_{k}(t)$ the Fourier coefficients of $u(\cdot,t)$ and $u_t(\cdot,t)$, respectively. This choice is possible if these Fourier coefficients form absolutely summable sequences, and it
corresponds then to a trigonometric interpolation of $u(\cdot,t)$ and $u_t(\cdot,t)$ in the collocation points $x_k$, $k\in\disc$. If the initial values $u(\cdot,t)$ and $u_t(\cdot,t)$ are given by their Fourier coefficients, the choice
\begin{equation}\label{eq-nlw-semi-init2}
y_j(t_0) = u_j(t_0), \qquad \dot{y}_j(t_0) = \dot{u}_j(t_0), \qquad j\in\disc
\end{equation}
is computationally advantageous. An error analysis of the semi-discretization in space is given for both choices of initial values in \cite{Gauckler}. 

The exact solution of the spatially discrete system~\eqref{eq-nlw-semi} is given by the variation-of-constants formula
\begin{equation}\label{eq-voc}
\begin{pmatrix} y(t)\\ \dot{y}(t)\end{pmatrix} = R(t-t_0) \begin{pmatrix} y(t_0)\\ \dot{y}(t_0)\end{pmatrix} + \int_{t_0}^t R(t-\ta) \begin{pmatrix} 0\\ f(y(\ta))\end{pmatrix} \,\drm\ta
\end{equation}
with
\begin{equation}\label{eq-R}
R(t) = \begin{pmatrix} \cos(t\Om) & t \sinc(t\Om)\\ -\Om\sin(t\Om) & \cos(t\Om)\end{pmatrix}.
\end{equation}
Via~\eqref{eq-ansatz}, this solution $(y,\dot{y})$ gives an approximation $u_\disc(x,t)$ of the nonlinear wave equation~\eqref{eq-nlw}. For real-valued initial values $u(x,t_0)$ and $u_t(x,t_0)$, this approximation takes real values in the collocation points $x_k$. An approximation that takes real values in all $x\in\TT$ (and the same values in the collocation points) can be obtained by replacing $y_{-K}(t) \e^{\iu (-K)x}$ in the ansatz~\eqref{eq-ansatz} by $\frac12 y_{-K}(t) (\e^{\iu (-K)x} + \e^{\iu Kx})$.

\subsection{Trigonometric integrators for the discretization in time}\label{subsec-trigo}

For the discretization in time of the spatially discrete system~\eqref{eq-nlw-semi}, we consider \emph{trigonometric integrators} (or \emph{exponential integrators}) as described for instance in~\cite[Section~XIII.2.2]{Hairer2006}. We will restrict here to methods in a one-step formulation which can be considered as direct discretizations of the variation-of-constants formula~\eqref{eq-voc}. They compute approximations $y^n$ to $y(t_n)$ at discrete times $t_n=t_0+nh$ with the time step-size $h$ by
\begin{equation}\label{eq-trigo}
\begin{pmatrix} y^{n+1}\\ \dot{y}^{n+1}\end{pmatrix} = R(h) \begin{pmatrix} y^n\\ \dot{y}^n\end{pmatrix} + \begin{pmatrix} \sfrac12 h^2 \Psi f(\Phi y^n)\\ \sfrac12 h \Psi_0 f(\Phi y^n) + \sfrac12 h \Psi_1 f(\Phi y^{n+1})\end{pmatrix}.
\end{equation}
The diagonal matrices $\Phi$, $\Psi$, $\Psi_0$ and $\Psi_1$ are filters defined by
\[
\Phi = \phi(h\Om), \qquad \Psi = \psi(h\Om), \qquad \Psi_0 = \psi_0(h\Om), \qquad \Psi_1 = \psi_1(h\Om)
\]
with filter functions $\phi$, $\psi$, $\psi_0$ and $\psi_1$ that satisfy $\phi(0)=\psi(0)=\psi_0(0)=\psi_1(0)=1$.

The method~\eqref{eq-trigo} is determined by its filter functions $\psi$, $\phi$, $\psi_0$ and $\psi_1$. For even filter functions, it is symmetric if and only if
\begin{equation}\label{eq-symmetric}
\psi(\xi) = \sinc(\xi) \psi_1(\xi) \myand \psi_0(\xi) = \cos(\xi) \psi_1(\xi),
\end{equation}
and it is then symplectic if and only if
\begin{equation}\label{eq-symplectic}
\psi(\xi) = \sinc(\xi) \phi(\xi),
\end{equation}
see \cite[Section~XIII.2.2]{Hairer2006}.
Popular choices of the filter functions $\psi$, $\phi$, $\psi_0$ and $\psi_1$ are
\begin{align*}
&(\text{B})\qquad && \psi(\xi) = \sinc(\xi), && \phi(\xi) = 1, && \text{$\psi_0$ and $\psi_1$ as in~\eqref{eq-symmetric},}\\
&(\text{C})\qquad && \psi(\xi) = \sinc^2(\xi), && \phi(\xi) = \sinc(\xi), && \text{$\psi_0$ and $\psi_1$ as in~\eqref{eq-symmetric},}\\
&(\text{E})\qquad && \psi(\xi) = \sinc^2(\xi), && \phi(\xi) = 1, && \text{$\psi_0$ and $\psi_1$ as in~\eqref{eq-symmetric},}\\
&(\text{G})\qquad && \psi(\xi) = \sinc^3(\xi), && \phi(\xi) = \sinc(\xi), && \text{$\psi_0$ and $\psi_1$ as in~\eqref{eq-symmetric}.}\\
\intertext{The labels (B), (C), (E) and (G) of these methods are the ones used in \cite{Grimm2006,Hairer2006}. Method (B) goes back to Deuflhard \cite{Deuflhard1979} and coincides in our situation with the impulse method \cite{Grubmueller1991,Tuckerman1992}, and method (C) is the mollified impulse method proposed by Garc\'{i}a-Archilla, Sanz-Serna \& Skeel \cite{GarciaArchilla1999}. Method (E) was first considered by Hairer \& Lubich \cite{Hairer2000} and method (G) by Grimm \& Hochbruck \cite{Grimm2006}. 
\endgraf
Yet another method that we introduce here and which turns out to work well in the case of the wave equation is the method with filter functions}
&(\widetilde{\text{B}})\qquad && \psi(\xi) = \chi(\xi) \sinc(\xi), && \phi(\xi) = \chi(\xi), && \text{$\psi_0$ and $\psi_1$ as in~\eqref{eq-symmetric},}
\end{align*}
where $\chi=\chi_{[-\pi,\pi]}$ is the characteristic function of the interval $[-\pi,\pi]$. This method uses truncated versions of the filter functions of method (B). Similar modifications of methods (C), (E) and (G) are possible. Such methods are computationally attractive since they require only the evaluation of a reduced version of $f$ on reduced arguments.

We do not  consider here the method  of Gautschi \cite{Gautschi1961} and the Gautschi-type method of Hochbruck \& Lubich \cite{Hochbruck1999} (methods (A) and (D) in \cite{Grimm2006,Hairer2006}); in these symmetric two-step methods, one uses $\psi(\xi)=\sinc^2(\xi/2)$, and hence already the formulation as a one-step method~\eqref{eq-trigo} does not make sense because of singularities in  $\psi_0$ and $\psi_1$ defined by~\eqref{eq-symmetric}. 
As long as these singularities are avoided (by an appropriate choice of the time step-size $h$), the one-step formulation~\eqref{eq-trigo} does make sense and our theory of the following subsection applies equally to these methods.

\subsection{Error bounds}\label{subsec-result}

We collect all assumptions on the filter functions $\psi$, $\phi$, $\psi_0$ and $\psi_1$ defining the trigonometric method~\eqref{eq-trigo} that we will need in the sequel.

\begin{assum}\label{assum-filter}
For given $-1\le\be\le 1$, we assume that there exists a constant $c$ such that, for all $\xi=h\om_j$ with $j\in\disc$ and $\om_j\ne 0$,
\begin{subequations}\label{eq-bound-filter}
\begin{align}
&\abs{\phi(\xi)} \le c,\label{eq-bound-filter-phi}\\
&\abs{\psi(\xi)} \le c\, \xi^{\be} \myif -1\le\be\le 0,\label{eq-bound-filter-psi1}\\
&\abs{1-\psi(\xi)} \le c\, \xi^{\be} \myif 0<\be\le 1,\label{eq-bound-filter-psi2}\\
&\abs{1-\chi(\xi)} \le c\, \xi^{1+\be} \myfor \chi=\phi,\psi_0,\psi_1.\label{eq-bound-filter-chi}
\end{align}
\end{subequations}
\end{assum}

There are many methods that satisfy Assumption~\ref{assum-filter} uniformly, for all $-1\le\be\le 1$, for all step-sizes $h>0$ and for all spatial discretization parameters $K$. The methods (B), (C), (E), (G) and ($\widetilde{\text{B}}$) mentioned in the previous subsection all satisfy Assumption~\ref{assum-filter} with $c=2$ for all $-1\le\be\le 1$, all $h>0$ and all $K$. For a symmetric and symplectic method with even filter functions, that is a method satisfying~\eqref{eq-symmetric} and~\eqref{eq-symplectic}, the inequalities~\eqref{eq-bound-filter-phi}--\eqref{eq-bound-filter-chi} of the above assumption hold with $c= C+2$ if
\[
\abs{ \phi(\xi) } \le C, \qquad \abs{1-\phi(\xi)} \le C \xi^{1+\be}.
\]

Under Assumption~\ref{assum-filter}, we will prove in Section~\ref{sec-proof} the following main result on the error of the trigonometric integrator~\eqref{eq-trigo}. The error is measured, for $s\in\R$, in the norm
\[
\norm{y}_s = \biggl( \sum_{j\in\disc} \skla{j}^{2s} \abs{y_j}^2 \biggr)^{1/2} \with \skla{j} = \max\klabig{ 1,\abs{j} }
\]
for $y\in\C^\disc$.
This norm is (equivalent to) the Sobolev $H^s$-norm\footnote{For $s\notin\mathbb{N}$, the usual convention is used that $H^s$ is the Bessel potential space.} of the trigonometric polynomial $\sum_{j\in\disc} y_j \e^{\iu jx}$. For $y=y^n$, this trigonometric polynomial is the fully discrete approximation of the solution $u(\cdot,t_n)$ of the nonlinear wave equation.

\begin{theorem}\label{thm-main}
Let $c\ge 1$ and $s\ge 0$, and assume that the exact solution $(y(t),\dot{y}(t))$ of the spatial semi-discretization~\eqref{eq-nlw-semi} of the nonlinear wave equation~\eqref{eq-nlw} satisfies
\begin{equation}\label{eq-finiteenergy}
\norm{y(t)}_{s+1} + \norm{\dot{y}(t)}_s \le M \myfor 0\le t-t_0 \le T.
\end{equation}
Then, there exists $h_0>0$ such that for all time step-sizes $h\le h_0$ the following error bound holds for the numerical solution $(y^n,\dot{y}^n)$ computed with the trigonometric integrator~\eqref{eq-trigo}: If Assumption~\ref{assum-filter} holds with constant $c$ for $\be=0$ and $\be=\al$ with some $-1\le\al\le 1$, then
\[
\normbig{y(t_n) - y^n}_{s+1-\al} + \normbig{\dot{y}(t_n) - \dot{y}^n}_{s-\al} \le C h^{1+\al} \myfor 0\le t_n-t_0=nh \le T.
\]
The constants $C$ and $h_0$ depend only on $M$ and $s$ from~\eqref{eq-finiteenergy}, the power $p$ of the nonlinearity in~\eqref{eq-nlw}, the final time $T$ and the constant $c$.
\end{theorem}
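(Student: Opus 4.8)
The plan is to follow the two-stage strategy announced in the introduction, in both stages running a classical Lady Windermere's fan argument on the one-step formulation~\eqref{eq-trigo}. Throughout, the key structural fact is that for the power nonlinearity $f(y)=y\conv\cdots\conv y$ one has a bilinear-type estimate: by standard properties of the discrete convolution on $\C^\disc$, $\norm{f(y)}_\sigma \le C \norm{y}_{s_1}\cdots\norm{y}_{s_p}$ whenever the exponents are chosen so that the product of the trigonometric polynomials lies in $H^\sigma$; in particular $f$ maps $H^1$ into $H^1$ (algebra property, since we are in one space dimension) and, more to the point, $f$ satisfies a \emph{local Lipschitz} bound in $H^\sigma$ for $\sigma>1/2$ with constant depending on the $H^\sigma$-norms of the arguments, while in $H^0$ it is only \emph{bounded} (not Lipschitz) on balls of $H^1$. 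This dichotomy is exactly why two stages are needed.

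\emph{Stage 1 (low-order bound in a smooth space).} Fix a regularity exponent $\sigma$ with $1/2<\sigma\le 1$, say $\sigma$ close to $1$, in which $f$ is locally Lipschitz. I would first compare the numerical one-step map~\eqref{eq-trigo} with the exact propagator~\eqref{eq-voc} over one step. The local error splits into (i) the difference between $R(h)$ applied exactly and the quadrature of the Duhamel integral $\int_{t_0}^{t_0+h} R(h-\tau)\binom{0}{f(y(\tau))}\,d\tau$ by the rule implicit in~\eqref{eq-trigo}, and (ii) the filter errors, i.e. the terms $\Phi-\Id$, $\Psi-\Id$, $\Psi_0-\Id$, $\Psi_1-\Id$ acting on $f$. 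For (ii) I would use Assumption~\ref{assum-filter} with $\be=0$: the bounds $\abs{1-\chi(\xi)}\le c\,\xi$ for $\chi=\phi,\psi_0,\psi_1$ give that $(\Id-\Phi)$ etc.\ gain a factor $h\om_j\le$ (something controllable) at the expense of one derivative, which combined with the $H^1$-regularity of $f(y)$ yields a local error of size $O(h^3)$ in $H^{\sigma-1}$, hence $O(h^2)$ after summing $n\le T/h$ steps. Local Lipschitz continuity of $f$ in $H^\sigma$ closes the Gronwall/Lady Windermere recursion provided the numerical iterates stay in a fixed ball; a standard bootstrap (the error being $O(h)$ small, the numerical solution inherits an $H^\sigma$-bound $2M$ say) makes this self-consistent for $h\le h_0$. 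The conclusion of Stage 1 is a first-order (in fact this already gives the $\al=0$ case) error bound in $H^\sigma\times H^{\sigma-1}$ and, crucially, the \emph{a priori regularity} $\norm{y^n}_{s+1}+\norm{\dot y^n}_s\le 2M$, uniformly in $h$ and $K$.

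\emph{Stage 2 (optimal-order bound in rough spaces).} Now I would redo the Lady Windermere argument but measuring the error in $H^{s+1-\al}\times H^{s-\al}$ and using the filter bounds of Assumption~\ref{assum-filter} with the given exponent $\be=\al$. The point is that the a priori bound from Stage 1 lets me estimate $\norm{f(y^n)-f(y(t_n))}$ in a \emph{negative-index} space by the error $\norm{y^n-y(t_n)}$ in a \emph{non-negative} space: writing $f(y^n)-f(y(t))$ as a telescoping sum $\sum (y^n)^{\conv k}\conv(y^n-y(t))\conv y(t)^{\conv(p-1-k)}$ and putting all the factors except the difference into $H^1$ (allowed by the $2M$ bound and by~\eqref{eq-finiteenergy}), the product lies in $H^{s-\al}$ with norm $\lesssim \norm{y^n-y(t)}_{s+1-\al}$ — no Lipschitz constant in a rough space is ever invoked. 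The one-step local error is again split into quadrature error and filter error; the filter terms now contribute $O(h^{2+\al})$ because $\abs{1-\chi(\xi)}\le c\,\xi^{1+\al}$ trades $1+\al$ powers of $h$ for $1+\al$ derivatives, and those derivatives are available since $f(y(\tau))\in H^1\subset H^{s-\al+(1+\al)}= H^{s+1}$ when $s\le$ ... (more precisely one uses $\be=\al$ in~\eqref{eq-bound-filter-psi1}/\eqref{eq-bound-filter-psi2} for the $\Psi$-term and~\eqref{eq-bound-filter-chi} for the others), while the quadrature error is $O(h^{2+\al})$ by Taylor expansion of $\tau\mapsto R(h-\tau)\binom{0}{f(y(\tau))}$ in $H^{s-\al}$, using that $t\mapsto f(y(t))$ is Lipschitz (indeed $C^1$) into $H^1$ because $y\in C^1\bigl([t_0,t_0+T],H^1\bigr)$ by~\eqref{eq-finiteenergy} and the algebra property. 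Summing $n\le T/h$ local errors of size $O(h^{2+\al})$ and absorbing the accumulated error via discrete Gronwall (whose "Lipschitz" input is the harmless bilinear estimate above) gives the claimed $O(h^{1+\al})$.

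\emph{Main obstacle.} The delicate point is \emph{not} the error accumulation — the whole novelty, as the introduction stresses, is that no cancellation is needed — but rather making the bootstrap between the two stages airtight uniformly in $K$: one must verify that the a priori $H^{s+1}\times H^s$ bound produced in Stage 1 is genuinely independent of the spatial discretization parameter (this relies on the filter bound~\eqref{eq-bound-filter-phi}, $\abs{\phi(\xi)}\le c$, to control $\Phi y^n$ inside the nonlinearity without any help from $h\om_j$ being small), and that the negative-order bilinear estimate for $f$ has constants independent of $K$ — here the discrete convolution over residues mod $2K$ must be handled with the right "no-loss" Young/algebra inequality on $\C^\disc$. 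A secondary technical nuisance is the implicit character of~\eqref{eq-trigo} (the term $\Psi_1 f(\Phi y^{n+1})$): solvability of the one-step map and its Lipschitz dependence on data must be established for $h\le h_0$, which is routine via a contraction in $H^\sigma$ using the Stage-1 ball, but must be stated.
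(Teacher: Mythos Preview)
Your two-stage Lady Windermere plan is essentially the paper's proof: first run the argument at $\alpha=0$ in $H^{s+1}\times H^s$ (Propositions~\ref{prop-local-uncond} and~\ref{prop-stab-uncond}) to obtain the uniform numerical regularity $\normv{(y^n,\dot y^n)}_s\le 2M$ of~\eqref{eq-regularity}, then rerun for general $\alpha$ using that regularity together with the bilinear convolution estimates (Propositions~\ref{prop-algebra}--\ref{prop-nonlinearity2}). The paper organises the split by the sign of $\alpha$ (Subsection~\ref{subsec-proof1} treats $-1\le\alpha\le 0$, the next subsection $0<\alpha\le 1$) rather than by ``smooth vs.\ rough'' space, but the substance is the same.

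Two corrections, neither fatal to the strategy. First, the scheme~\eqref{eq-trigo} is \emph{explicit}: the first row determines $y^{n+1}$ from $(y^n,\dot y^n)$ alone, and only then does the second row use the already computed $y^{n+1}$ inside $\Psi_1 f(\Phi y^{n+1})$; your ``secondary technical nuisance'' about a contraction argument is a non-issue. Second, your Stage~1 bookkeeping is inconsistent: with $\beta=0$ the local error in $H^{s+1}\times H^s$ is $O(h^2)$, not $O(h^3)$ (Proposition~\ref{prop-local-uncond} with $\alpha=0$), which gives the global $O(h)$ you correctly state as ``first-order'' but contradicts your ``$O(h^3)$ local, $O(h^2)$ global'' sentence. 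Relatedly, the auxiliary exponent $\sigma\in(1/2,1]$ is superfluous: since $s\ge 0$ one has $s+1\ge 1$, and the paper works directly in $H^{s+1}\times H^s$ where $f$ is locally Lipschitz by Proposition~\ref{prop-nonlinearity} with $\si=\si'=s+1$.
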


The proof of the above theorem will be given in Section~\ref{sec-proof}. We emphasize that the error bounds are uniform in the spatial discretization parameter $K$. 
They can be combined with the error analysis of the semi-discretization in space as given in \cite{Gauckler} to yield error bounds for the full discretization.

For $s=0$, the assumption $\norm{y(t)}_{s+1} + \norm{\dot{y}(t)}_s \le M$ in Theorem~\ref{thm-main} is basically a finite energy assumption on the solution of the nonlinear wave equation~\eqref{eq-nlw} and its spatial semi-discretization~\eqref{eq-nlw-semi}. In this case, Theorem~\ref{thm-main} yields, for example, a second-order error bound for $y$ in $L^2$ ($\al=1$), and a first-order error bound for $\dot{y}$ in~$L^2$ ($\al=0$).

To obtain second-order error bounds for $y$ and first-order error bounds for $\dot{y}$, 
similar but stronger assumptions on the filter functions have been used in \cite[Equations~(11)--(16)]{Grimm2006} and \cite[Equation (4.1) of Section XIII.4]{Hairer2006} to treat a slightly different kind of second-order oscillatory differential equations (note that in \cite[Section XIII.4]{Hairer2006} the regime $h\om_j\ge c_0>0$ for $j\ne 0$ is considered, in which~\eqref{eq-bound-filter-psi2} is implied by~\eqref{eq-bound-filter-psi1}, for instance). The methods (B) and (E), which do not use a filter inside the nonlinearity, do not satisfy the assumptions of \cite{Grimm2006,Hairer2006} for all step-sizes $h>0$ and do not show second-order convergence for the equations considered therein. Our error analysis covers these methods and hence shows that, in the case of the nonlinear wave equation, filtering inside the nonlinearity is indeed not necessary, at least when it comes to error bounds on bounded time intervals, see also Subsection~\ref{subsec-useoffilter}. This has also been observed by Cano \& Moreta \cite{Cano2010a}. 

Our assumptions~\eqref{eq-bound-filter} on the filter functions are not fulfilled for the method of Gautschi and the Gautschi-type method of Hochbruck \& Lubich (methods (A) and (D) in \cite{Grimm2006,Hairer2006}) whenever the product $h\om_j$ of the time step-size $h$ and a frequency $\om_j$ is close to an odd integer multiple of $\pi$ for some $j\in\disc$. An error analysis of the Gautschi-type method of Hochbruck \& Lubich when applied to the Sine--Gordon equation is given in~\cite{Grimm2006a}. With a combination of the proof as given there and the proof to be presented in the present paper, it should be possible to extend this analysis to nonlinear wave equations~\eqref{eq-nlw} with polynomial nonlinearities.

\subsection{Numerical experiments}\label{subsec-numexp}

We illustrate the error bounds of Theorem~\ref{thm-main} by numerical experiments\footnote{The numerical experiments used an implementation of a Pad\'{e} approximation of the function $\sinc$ that was kindly provided by Georg Jansing (Universit\"at D\"usseldorf).} for the quadratic nonlinear wave equation, that is~\eqref{eq-nlw} with $p=2$. We consider the spatial semi-discretization~\eqref{eq-nlw-semi} for several values of the spatial discretization parameter $K$,
\[
K=2^5, \, 2^7, \, 2^9, \, 2^{11}, \, 2^{13},
\]
and approximate it with the trigonometric integrator~\eqref{eq-trigo}. 

\begin{figure}[t]
\centering
\includegraphics{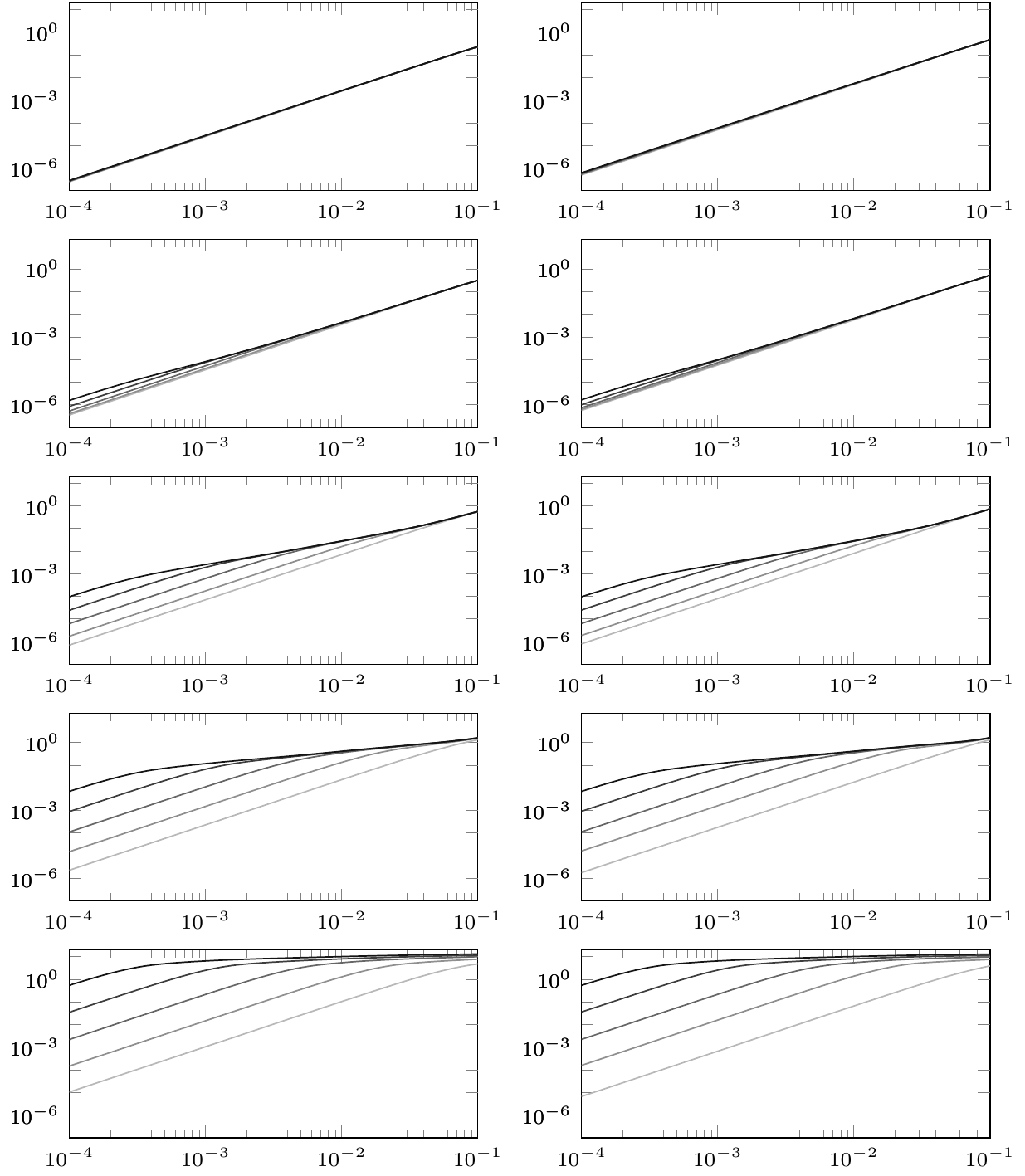}
\caption{Mollified impulse method (C): Errors $\norm{y(t_n)-y^{n}}_{1-\al}$ (left column) and $\norm{\dot{y}(t_n)-\dot{y}^{n}}_{-\al}$ (right column) at time $t_n=t_0+1$ versus the time step-size $h$ with $\al=1,\frac12,0,-\frac12,-1$ (from top to bottom). Different grey tones correspond to different values of the spatial discretization parameter~$K$.}\label{fig-mollified}
\end{figure}

As initial value for~\eqref{eq-nlw-semi} we choose vectors
\[
\klabig{y(t_0),\dot{y}(t_0)}\in\C^\disc\times\C^\disc
\]
that are bounded uniformly in the spatial discretization parameter $K$
\[
\text{in} \qquad H^{s+1}\times H^s \qquad \text{for} \quad s=0 \quad \text{but not for} \quad s\ge \sfrac1{100}.
\]
More precisely, we choose coefficients $y_j(t_0)$ and $\dot{y}_j(t_0)$ on the complex unit circle and then scale them by $\skla{j}^{-1.51}$ and $\skla{j}^{-0.51}$, respectively. The choice of complex numbers on the unit circle is more or less randomly; we only ensure that the corresponding trigonometric polynomial takes real values in the collocation points. In this way, we get initial values $y(t_0)$ and $\dot{y}(t_0)$ that satisfy the condition~\eqref{eq-finiteenergy} of Theorem~\ref{thm-main} at time $t=t_0$ uniformly in $K$ for $s=0$ but not for $s\ge \frac1{100}$, and we expect that this holds true on a finite time interval. 

For the discretization in time, we first use the mollified impulse method (method (C) of Subsection~\ref{subsec-trigo}). In Figure~\ref{fig-mollified}, we plot the errors $y(t_n)-y^{n}$ (left column) and $\dot{y}(t_n)-\dot{y}^{n}$ (right column) at time $t_n=t_0+1$ in dependence of the time step-size $h$. In the different rows of Figure~\ref{fig-mollified}, these errors are measured in different Sobolev norms: we plot
\[
\normbig{y(t_n)-y^{n}}_{1-\al} \myand \normbig{\dot{y}(t_n)-\dot{y}^{n}}_{-\al}
\]
as functions of $h$ with, from top to bottom,
\[
\al=1,\,\sfrac12,\,0,\,-\sfrac12,\,-1.
\]
In different grey tones, we plot the results for different values of the spatial discretization parameter $K$. Being interested in the order of convergence that is uniform in $K$, we clearly observe a dependence of this order on the considered norm. The observed order of convergence that is uniform in $K$ is $1+\al$, in agreement with Theorem~\ref{thm-main}. This illustrates the sharpness of the error bounds of this theorem with respect to both, the order of convergence and the considered Sobolev space. We finally observe that, under the CFL-type step-size restriction $hK\le \pi$, 
the convergence is of order two in all norms. The figures clearly show that this second-order convergence is not uniform in $K$ for $\al<1$.

\begin{figure}[t]
\centering
\includegraphics{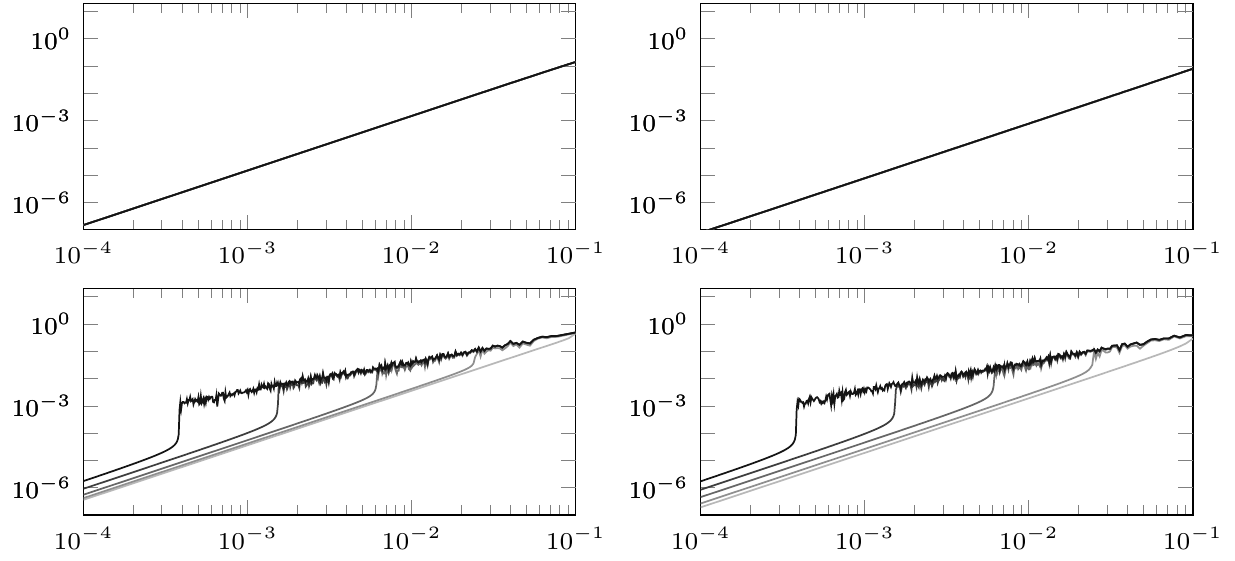}
\caption{Deuflhard/impulse method (B): Errors $\norm{y(t_n)-y^{n}}_{1-\al}$ (left column) and $\norm{\dot{y}(t_n)-\dot{y}^{n}}_{-\al}$ (right column) at time $t_n=t_0+1$ versus the time step-size $h$ with $\al=\frac12,-\frac12$ (from top to bottom). Different grey tones correspond to different values of the spatial discretization parameter~$K$.}\label{fig-deuflhard}
\end{figure}

If method (B) of Subsection~\ref{subsec-trigo} (the method of Deuflhard which coincides with the impulse method) 
is used instead of method (C), we observe a slightly different behaviour. In Figure~\ref{fig-deuflhard}, the errors $y(t_n)-y^{n}$ (left column) and $\dot{y}(t_n)-\dot{y}^{n}$ (right column) at time $t_n=t_0+1$ of this method are plotted. We observe second-order convergence of $(y,\dot{y})$ uniformly in $K$ not only in $H^0\times H^{-1}$, as suggested by Theorem~\ref{thm-main}, but also in $H^{1/2}\times H^{-1/2}$. First-order convergence uniformly in $K$ is observed in $H^{3/2}\times H^{1/2}$, instead of $H^{1}\times H^{0}$ as for the mollified impulse method (C). At present, we do not have a theoretical explanation for this improved convergence behaviour of method~(B). 

\begin{figure}[t]
\centering
\includegraphics{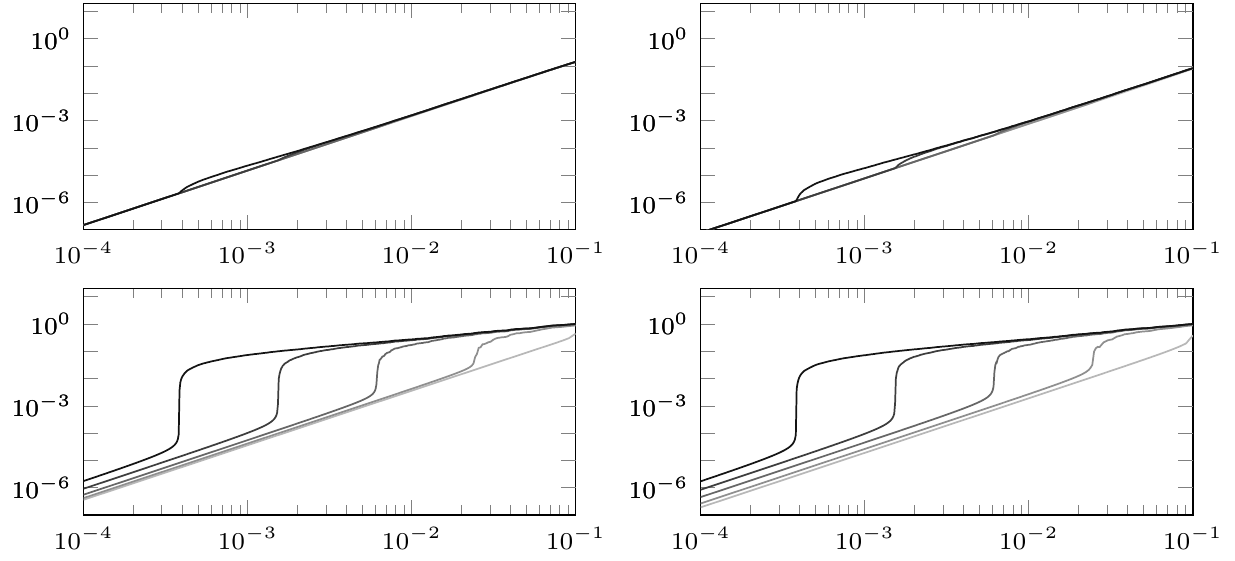}
\caption{Method ($\widetilde{\text{B}}$): Errors $\norm{y(t_n)-y^{n}}_{1-\al}$ (left column) and $\norm{\dot{y}(t_n)-\dot{y}^{n}}_{-\al}$ (right column) at time $t_n=t_0+1$ versus the time step-size $h$ with $\al=\frac12,-\frac12$ (from top to bottom). Different grey tones correspond to different values of the spatial discretization parameter~$K$.}\label{fig-newdeufl}
\end{figure}

This exceptionally good behaviour of a trigonometric integrator seems to be restricted to this particular method. 
For methods (E) and (G) of Subsection~\ref{subsec-trigo}, the results are qualitatively the same as for method (C) in Figure~\ref{fig-mollified}, and this behaviour can again be completely explained with Theorem~\ref{thm-main}. For method ($\widetilde{\text{B}}$) of Subsection~\ref{subsec-trigo}, the results are qualitatively slightly different from those for method (C), see Figure~\ref{fig-newdeufl}, but they still can be completely explained with Theorem~\ref{thm-main}.

\begin{figure}[t]
\centering
\includegraphics{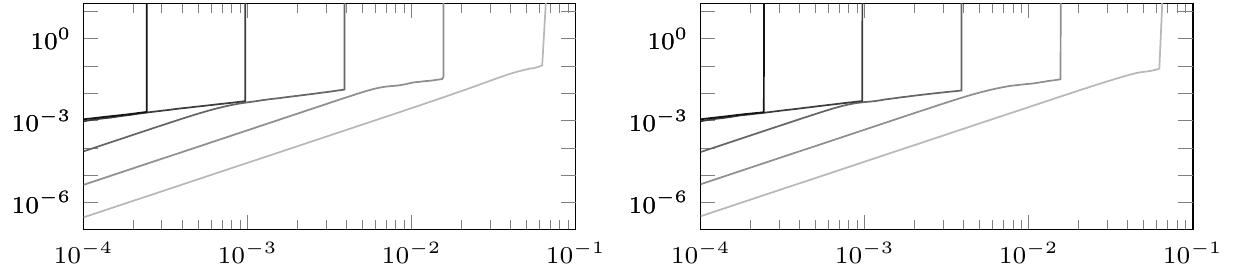}
\caption{St\"ormer--Verlet/leapfrog method: Errors $\norm{y(t_n)-y^{n}}_{1-\al}$ (left column) and $\norm{\dot{y}(t_n)-\dot{y}^{n}}_{-\al}$ (right column) at time $t_n=t_0+1$ versus the time step-size $h$ with $\al=1$. Different grey tones correspond to different values of the spatial discretization parameter~$K$.}\label{fig-stoermerverlet}
\end{figure}

It is interesting to compare the observed and theoretically explained convergence behaviour of trigonometric integrators with the behaviour of the St\"ormer--Verlet/leap\-frog discretization in time, one of the widely used discretizations of wave equations. See Subsection~\ref{subsec-sv} below for a description of the method when applied to systems of the form~\eqref{eq-nlw-semi}. Repeating the experiment described above with the St\"ormer--Verlet/leapfrog method gives Figure~\ref{fig-stoermerverlet}. The well-known instability of this method if $h\om_j> 2$ for some $j\in\disc$, i.e., $hK> 2$, is clearly visible. Under the step-size restriction $hK\le 2$, we observe in addition that the uniform convergence of the St\"ormer--Verlet/leapfrog method in $H^0\times H^{-1}$ is of order $2/3$. In comparison, the considered trigonometric integrators are in $H^0\times H^{-1}$ second-order convergent uniformly in $K$, even without the step-size restriction $hK\le 2$, see Figures~\ref{fig-mollified}--\ref{fig-newdeufl} and Theorem~\ref{thm-main}. An explanation of the convergence behaviour of the St\"ormer--Verlet/leapfrog discretization will be given in Subsection~\ref{subsec-sv}.

\section{Proofs of the error bounds of Theorem~\ref{thm-main}}\label{sec-proof}

\subsection{Estimates of the nonlinearity}\label{subsec-nonlinearity}

We prove some important, yet elementary, estimates of the nonlinearity $f$ in the spatial semi-discretization~\eqref{eq-nlw-semi}. These estimates give us a tool to climb the scale of Sobolev up and down, but on the other hand, they also force us to do so. We emphasize that all estimates given in this section are uniform in the spatial discretization parameter $K$ from Subsection~\ref{subsec-collocation}.

We begin with the following estimates of the convolution of two vectors in the spaces $H^{\si}$, $\si\in\R$. At least some special cases of these estimates are known, see \cite[Lemma~4.2]{Hairer2008}. 

\begin{proposition}\label{prop-algebra}
(i) Let $\si,\si'\in\R$ with $\si'\ge\abs{\si}$ and $\si'\ge 1$. 
We then have, for $y,z\in\C^{\disc}$,
\[
\norm{y\conv z}_{\si} \le C \norm{y}_{\si'} \norm{z}_{\si}
\]
with a constant $C$ depending only on $\abs{\si}$.

(ii) Let $\si\in\R$ with $\si\ge -1$. 
We then have, for $y,z\in\C^{\disc}$,
\[
\norm{y\conv z}_{\si} \le C \norm{y}_{\si+1} \norm{z}_{\si+1}
\]
with a constant $C$ depending only on $\abs{\si}$.
\end{proposition}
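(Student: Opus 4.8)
The plan is to prove both parts by working directly with the convolution formula \eqref{eq-f} and the Fourier-side weights $\skla{j}^{\si}$, reducing everything to a discrete Young/Schur-type estimate. The key point is the elementary ``Peetre-type'' inequality: for $k+l\equiv j \pmod{2K}$ one can choose the representative so that $\skla{j}\le \skla{k}+\skla{l}$, hence $\skla{j}^{\si}\le C_{\abs\si}\klabig{\skla{k}^{\si}+\skla{l}^{\si}}$ for $\si\ge 0$, while for $\si<0$ one instead uses $\skla{j}^{\si}\le C_{\abs\si}\skla{k}^{\si}\skla{l}^{\si}\cdot\max\klabig{\skla{k},\skla{l}}^{-\si}$ or, more usefully, bounds $\skla{k}^{\si}$ or $\skla{l}^{\si}$ from below by $\skla{j}^{\si}$ times a positive power of the other index. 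In both parts the strategy is the same: write
\[
\klabig{y\conv z}_j = \sum_{k+l\equiv j} y_k z_l,
\]
insert the weight $\skla{j}^{\si}$, distribute it onto $k$ and $l$ using the above, and then recognize the resulting sum as the $j$-th entry of a convolution of two sequences in $\ell^2$ and $\ell^1$. Since $\si'\ge 1$ (part (i)) resp. $\si+1\ge 0$ with a full extra power available on both factors (part (ii)), the ``$\ell^1$ factor'' will be $\klabig{\skla{k}^{\si'-\si}\,\abs{y_k}}_k$ or similar, and one bounds its $\ell^1$ norm by its $\ell^2$ norm via Cauchy--Schwarz, using that $\sum_{k}\skla{k}^{-2(\si'-\si)}<\infty$ when $\si'-\si\ge 1$ (and likewise $\sum_k\skla{k}^{-2}<\infty$). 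This is where the hypotheses $\si'\ge 1$ and $\si'\ge\abs\si$ in (i), and $\si\ge -1$ in (ii), enter: they guarantee the relevant negative-power series converges with a constant depending only on $\abs\si$.

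For part (i): for $\si\ge 0$ split $\skla{j}^{\si}\le C\klabig{\skla{k}^{\si}+\skla{l}^{\si}}$. The term with $\skla{l}^{\si}$ gives $\sum_k\abs{y_k}\cdot\skla{l}^{\si}\abs{z_l}$, which by Young's inequality is bounded by $\norm{y}_0\,\norm{z}_{\si}\le\norm{y}_{\si'}\norm{z}_{\si}$ after Cauchy--Schwarz on $\sum_k\abs{y_k}=\sum_k\skla{k}^{-\si'}\cdot\skla{k}^{\si'}\abs{y_k}$ (using $\si'\ge 1$). The term with $\skla{k}^{\si}$ gives $\sum_k \skla{k}^{\si}\abs{y_k}\cdot\abs{z_l}$; here since $\si'\ge\si$ we can write $\skla{k}^{\si}\abs{y_k}=\skla{k}^{\si-\si'}\cdot\skla{k}^{\si'}\abs{y_k}$ and again Cauchy--Schwarz the factor $\skla{k}^{\si-\si'}\abs{y_k}$ into $\ell^1$ (needs $\si'-\si\ge 1$, i.e.\ $\si'\ge\si+1$ — if $\si'<\si+1$ but $\si'\ge\si$ one can instead keep $\skla{l}^{\text{something}}$; more cleanly, one uses that $\skla{j}\le\skla{k}\skla{l}$ always, so $\skla{j}^{\si}\le\skla{k}^{\si}\skla{l}^{\si}$ and one distributes symmetrically). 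For $\si<0$ (with $\si'\ge-\si$, $\si'\ge1$): from $\skla{j}^{\si}$, since $-\si\le\si'$, note $\skla{j}^{-\si}\le\skla{k}^{-\si}\skla{l}^{-\si}$ wait — better: $\skla{k}\le\skla{j}\skla{l}$ gives $\skla{j}^{\si}\le\skla{k}^{\si}\skla{l}^{-\si}$, so $\skla{j}^{\si}\abs{y_kz_l}\le\klabig{\skla{l}^{-\si}\abs{y_k}\cdot\ldots}$... I will organize this so the weight always lands as ``$\skla{k}^{\si'}\abs{y_k}$ in $\ell^2$, $\skla{k}^{-\si'+(\text{stuff})}$ summably in $\ell^1$, and $\skla{l}^{\si}\abs{z_l}$ in $\ell^2$'', which is exactly what Young+Cauchy--Schwarz closes. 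Part (ii) is the special self-improving case: with $\si\ge-1$ the identity $\skla{j}^{\si}\le C\skla{j}\le C\skla{k}\skla{l}\le C\skla{k}^{\si+1}\skla{l}^{\si+1}\cdot\skla{k}^{-\si}\skla{l}^{-\si}$ is too lossy; instead use $\skla{j}^{\si}\le C\klabig{\skla{k}^{\si}+\skla{l}^{\si}}$ for $\si\ge 0$ and handle $-1\le\si<0$ by writing $\skla{j}^{\si+1}\le C\klabig{\skla{k}^{\si+1}+\skla{l}^{\si+1}}$ and $\skla{j}^{-1}\le 1$, then Cauchy--Schwarz the leftover $\ell^1$ factor using $\sum\skla{k}^{-2}<\infty$. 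In all cases the emerging constant depends only on $\abs\si$ through the constant $C_{\abs\si}$ in the Peetre inequality and through $\klabig{\sum_k\skla{k}^{-2(\si'-\si)}}^{1/2}$ or $\klabig{\sum_k\skla{k}^{-2}}^{1/2}$, as claimed; the restriction of the sums to $\disc$ only helps.

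The main obstacle I anticipate is purely bookkeeping: choosing, for each range of $\si$ (and each of the two parts), the right way to split the weight $\skla{j}^{\si}$ across $k$ and $l$ so that one factor carries the ``good'' weighted-$\ell^2$ norm on $y$ (or $z$) and the other becomes an absolutely summable $\ell^1$ multiplier — and doing so symmetrically enough that the estimate on $\norm{y\conv z}_{\si}$ comes out with $\norm{y}$ in the higher-index norm and $\norm{z}$ in the lower one (part (i)) or symmetric (part (ii)). There is no genuine analytic difficulty once the discrete Young inequality $\norm{a\conv b}_{\ell^2}\le\norm{a}_{\ell^1}\norm{b}_{\ell^2}$ and Cauchy--Schwarz are in hand; the wraparound condition $k+l\equiv j\pmod{2K}$ is harmless because $\skla{\cdot}$ is comparable on any residue class representative and the index set is finite. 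I would present (ii) as a consequence of (the method of) (i) with $\si'=\si+1$, checking separately the borderline $\si=-1$ where $\skla{j}^{-1}\le1$ is used.
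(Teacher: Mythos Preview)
Your plan is to push everything through Young's inequality $\norm{a\conv b}_{\ell^2}\le\norm{a}_{\ell^1}\norm{b}_{\ell^2}$ combined with Cauchy--Schwarz to convert the $\ell^1$ factor into a weighted $\ell^2$ norm. This works away from the endpoints, but it is genuinely weaker than what is needed and breaks down precisely at the borderline cases the proposition includes. Concretely, take part~(ii) with $\si=-\tfrac12$: you want $\norm{y\conv z}_{-1/2}\le C\norm{y}_{1/2}\norm{z}_{1/2}$. Your route via $\norm{y\conv z}_{-1/2}\le\norm{y\conv z}_0\le\norm{y}_{\ell^1}\norm{z}_0$ requires $\norm{y}_{\ell^1}\le C\norm{y}_{1/2}$, i.e.\ $\sum_k\skla{k}^{-1}<\infty$, which fails; and the ``$\skla{j}^{-1}\le 1$'' trick you propose for $\si=-1$ does not extend, since $\sum_j\skla{j}^{-1}$ also diverges. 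The same obstruction appears in part~(i) at $(\si,\si')=(\tfrac12,1)$: after the Peetre split $\skla{j}^{1/2}\le C(\skla{k}^{1/2}+\skla{l}^{1/2})$, the $\skla{k}^{1/2}$-term forces either $\sum_k\skla{k}^{-1}<\infty$ (for $\norm{\skla{\cdot}^{1/2}y}_{\ell^1}\le C\norm{y}_1$) or $\sum_l\skla{l}^{-1}<\infty$ (for $\norm{z}_{\ell^1}\le C\norm{z}_{1/2}$), and both diverge. So the difficulty is \emph{not} ``purely bookkeeping'': Young's inequality exploits decay in only one factor at a time, and at the endpoint neither factor alone carries enough.

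The paper avoids this by applying Cauchy--Schwarz \emph{directly} to the inner convolution sum with weights (a Schur-test argument), rather than first passing through $\ell^1\conv\ell^2$. It proves once the kernel bound
\[
\sup_{j\in\disc}\sum_{k\in\disc}\frac{\skla{j}^{2s}}{\skla{k}^{2s'}\skla{j-k\bmod 2K}^{2s''}}\le C
\qquad\text{whenever }0\le s\le s',\ 0\le s\le s'',\ s'+s''-s\ge 1,
\]
using $\skla{j}\le 2\max(\skla{k},\skla{l})$ to reduce to $\sum_k\skla{k}^{-2}$. Then, for $\si\ge 0$, Cauchy--Schwarz on $\sum_k y_k z_{j-k}$ (inserting $\skla{k}^{\pm s'}\skla{j-k}^{\pm s''}$) gives (i) with $(s,s',s'')=(\si,\si',\si)$ and (ii) with $(s,s',s'')=(\si,\si+1,\si+1)$; for $\si\le 0$ one uses a different Cauchy--Schwarz splitting that puts the weight $\skla{j}^{-2s'}$ in the outer sum. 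The point is that the kernel bound always reduces to $\sum\skla{k}^{-2(s'+s''-s)}$ with exponent $\ge 2$, so the endpoints are included. Your sketch for $\si<0$ in (i) is also incomplete as written (the ``wait --- better'' passage does not arrive at a usable splitting); the paper's second Cauchy--Schwarz variant is what closes that case.
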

\begin{proof}
We first show, for $s,s',s''\in\R$ with $0\le s\le s'$, $0\le s\le s''$ and $s'+s''-s\ge 1$, the inequality 
\begin{equation}\label{eq-inequ-fund}
\sum_{k\in\disc} \frac{\skla{j}^{2s}}{\skla{k}^{2s'} \skla{j- k \bmod{2K}}^{2s''} } \le C
\end{equation}
with a generic constant $C$ depending on $s$ but not on $j\in\disc$. 
Here, we denote by $j- k \bmod{2K}$ the index in the finite set $\disc$ that is congruent to $j-k$ modulo $2K$.
Using
\[
\skla{j} \le \skla{k+(j- k \bmod{2K})} \le \skla{k} + \skla{j- k \bmod{2K}} \le 2\max\klabig{ \skla{k}, \skla{j- k \bmod{2K}} }
\]
for $j\in\disc$ together with $0\le s\le s'$ and $0\le s\le s''$ shows that 
\[
\frac{\skla{j}^{2 s}}{\skla{k}^{2 s'} \skla{j- k \bmod{2K}}^{2 s''} } \le \frac{2^{2 s}}{\min\klabig{ \skla{k}, \skla{j- k \bmod{2K}} }^{2(s'+s''-s)} }.
\]
With $s'+s''-s\ge 1$ and $1/\min(a,b)^2 \le 1/a^2+1/b^2$ for $a,b>0$ we thus see that the sum in~\eqref{eq-inequ-fund} is dominated by the convergent sum $2^{2 s+1} \sum_{k\in\Z} \skla{k}^{-2} = 2^{2 s+1}(1+\pi^2/3)$.

With the help of the inequality~\eqref{eq-inequ-fund}, we now prove statements (i) and (ii) of the proposition. We distinguish between $\si\ge 0$ and $\si\le 0$.

(a) First, we consider the case $\si\ge 0$. Let $s,s',s''\in\R$ to be chosen later. We have
\begin{equation}\label{eq-prop-algebra-aux}
\norm{y\conv z}_{s}^2 = \sum_{j\in\disc} \skla{j}^{2s} \absbigg{\sum_{k\in\disc} y_k z_{j-k\bmod{2K}}}^2. 
\end{equation}
Applying the Cauchy-Schwarz inequality to the second sum yields
\begin{align*}
\norm{y\conv z}_{s}^2 \le \sum_{j\in\disc} &\klabigg{\sum_{k\in\disc} \frac{\skla{j}^{2s}}{\skla{k}^{2s'} \skla{j-k\bmod{2K}}^{2s''}} }\\
 \cdot &\klabigg{ \sum_{k\in\disc} \skla{k}^{2s'} \abs{y_k}^2 \skla{j-k\bmod{2K}}^{2s''} \abs{z_{j-k\bmod{2K}}}^2 }.
\end{align*}
Choosing $s=s''=\si$ and $s'=\si'$ and using the inequality~\eqref{eq-inequ-fund} then shows the statement (i) of the proposition. Similarly, statement (ii) follows from~\eqref{eq-inequ-fund} with $s=\si$ and $s'=s''=\si+1$.

(b) Finally, we consider the case $\si\le 0$. Let again $s,s',s''\in\R$. Applying the Cauchy-Schwarz inequality to the second sum of~\eqref{eq-prop-algebra-aux}, but in a different way than in step (a), yields
\[
\norm{y\conv z}_{-s'}^2 \le \sum_{j\in\disc} \skla{j}^{-2s'} \klabigg{\sum_{k\in\disc} \skla{k}^{2s''} \abs{y_k}^2 } \klabigg{ \sum_{k\in\disc} \frac{1}{\skla{j-k \bmod{2K}}^{2s''}} \abs{z_{k}}^2 },
\]
and hence
\[
\norm{y\conv z}_{-s'}^2 \le \norm{y}_{s''}^2 \sum_{k\in\disc} \klabigg{ \sum_{j\in\disc}\frac{\skla{k}^{2s}}{\skla{j}^{2s'} \skla{j-k\bmod{2K}}^{2s''}}} \skla{k}^{-2s} \abs{z_{k}}^2.
\]
Statement (i) now follows from~\eqref{eq-inequ-fund} with $s=s'=-\si$ and $s''=\si'$. For statement (ii) we use $s=0$, $s'=-\si$ and $s''=\si+1$, and then $\norm{z}_0\le\norm{z}_{\si+1}$.
\end{proof}

These estimates of the convolution allow us to prove the following important properties of the nonlinearity $f(y)$ given by~\eqref{eq-f}.

\begin{proposition}\label{prop-nonlinearity}
Let $\si,\si'\in\R$ with $\si'\ge\abs{\si}$ and $\si'\ge 1$. If 
\[
\norm{y}_{\si'} \le M,\qquad \norm{z}_{\si'} \le M,
\]
then 
\begin{subequations}
\begin{align}
\norm{ f(y) - f(z) }_{\si} &\le C \norm{y-z}_{\si}, \label{eq-lipschitz}\\
\norm{ f(y) }_{\si'} &\le C \label{eq-nonlinearity}
\end{align}
\end{subequations}
with a constant $C$ depending on $M$, $\abs{\si}$, $\si'$ and $p$.
\end{proposition}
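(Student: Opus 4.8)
The plan is to deduce both estimates from the bilinear convolution bounds of Proposition~\ref{prop-algebra} by a telescoping argument, exploiting that $f$ is a $p$-fold convolution and that $\si'\ge 1$ makes $H^{\si'}$ a Banach algebra under $\conv$ (the case $\si=\si'$, $\si'\ge 1$ of Proposition~\ref{prop-algebra}(i)).

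\begin{proof}
Abbreviate $\norm{\cdot}=\norm{\cdot}_{\si}$, $\norm{\cdot}'=\norm{\cdot}_{\si'}$, and write $y^{\conv k}$ for the $k$-fold convolution of $y$ with itself (with $y^{\conv 0}$ the unit for $\conv$). Since $\si'\ge\abs{\si'}$ and $\si'\ge 1$, Proposition~\ref{prop-algebra}(i) with $\si=\si'$ gives $\norm{v\conv w}'\le C\norm{v}'\norm{w}'$; by induction $\norm{y^{\conv k}}'\le C^{k-1}\norm{y}'^{\,k}\le (CM)^{k}$ for all $k\le p$, and likewise for $z$. Taking $v=y^{\conv (p-1)}$, $w=y$ in the hypothesis $\si'\ge\abs{\si}$, $\si'\ge 1$ of Proposition~\ref{prop-algebra}(i) yields
\[
\norm{f(y)}=\norm{y^{\conv(p-1)}\conv y}\le C\,\norm{y^{\conv(p-1)}}'\,\norm{y}\le C\,(CM)^{p-1}M,
\]
which is~\eqref{eq-nonlinearity} (with $\si$ replaced by $\si'$, using $\norm{y}\le\norm{y}'$ when needed, or directly applying the bound at level $\si'$).

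For~\eqref{eq-lipschitz} I would telescope:
\[
f(y)-f(z)=\sum_{k=0}^{p-1} y^{\conv k}\conv(y-z)\conv z^{\conv(p-1-k)}.
\]
Each summand is estimated by applying Proposition~\ref{prop-algebra}(i) twice: first pull out the factor $y-z$ at level $\si$ against the remaining $(p-1)$-fold convolution at level $\si'\,(\ge\abs{\si},\ge1)$, then bound that $(p-1)$-fold convolution in $\norm{\cdot}'$ by the algebra property, getting $\le C(CM)^{p-1}\norm{y-z}_{\si}$ per term; summing the $p$ terms gives~\eqref{eq-lipschitz}.

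The only point requiring care is the bookkeeping of the mixed products $y^{\conv k}\conv(y-z)\conv z^{\conv(p-1-k)}$: one must group them so that $y-z$ is always the factor kept at the low regularity level $\si$ while everything else sits at level $\si'$, which is legitimate precisely because Proposition~\ref{prop-algebra}(i) is asymmetric in its two arguments. There is no real obstacle here — the proposition was evidently designed with exactly this application in mind — so the proof is a short induction plus the telescoping identity above.
\end{proof}
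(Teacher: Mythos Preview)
Your proof is correct and follows essentially the same route as the paper: the telescoping identity for $f(y)-f(z)$ together with repeated use of Proposition~\ref{prop-algebra}(i). The paper peels off one factor at a time ($p-1$ applications of Proposition~\ref{prop-algebra}(i) per summand), whereas you first bundle all high-regularity factors via the algebra property and then apply Proposition~\ref{prop-algebra}(i) once; both amount to the same estimate. Your handling of~\eqref{eq-nonlinearity} is slightly roundabout --- the induction you already stated, $\norm{y^{\conv k}}_{\si'}\le C^{k-1}\norm{y}_{\si'}^{k}$, gives~\eqref{eq-nonlinearity} immediately at $k=p$, so the detour through the $\si$-norm and the parenthetical remark are unnecessary.
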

\begin{proof}
The estimate~\eqref{eq-nonlinearity} follows from Proposition~\ref{prop-algebra} (i) applied $p-1$ times with $\si'=\si$. Also the estimate~\eqref{eq-lipschitz} follows from part (i) of this proposition applied $p-1$ times to
\[
f(y)-f(z)= \sum_{j=0}^{p-1} 
\underbrace{y\conv \dotsm \conv y}_{\text{$j$ times}} \conv \underbrace{z\conv \dotsm \conv z}_{\text{$p-j-1$ times}\hspace{-12pt}} \, \conv \, (y-z) .\qedhere
\]
\end{proof}

\begin{proposition}\label{prop-nonlinearity2}
Let $s\ge 0$. If, for $y\colon[t_0,t_1]\rightarrow\C^\disc$,
\[
\normbig{y(t)}_{s+1} \le M,\qquad \normbig{\dot{y}(t)}_{s} \le M \myfor t_0\le t\le t_1,
\]
then 
\begin{subequations}
\begin{equation}\label{eq-deriv1-nonlinearity}
\normbigg{ \frac{\drm}{\drm t} f\klabig{y(t)} }_{s} \le C
\end{equation}
with a constant $C$ depending on $M$, $s$ and $p$.
If, in addition,
\[
\normbig{\ddot{y}(t)}_{s-1} \le M \myfor t_0\le t\le t_1,
\]
then 
\begin{equation}\label{eq-deriv2-nonlinearity}
\normbigg{ \frac{\drm^2}{\drm t^2} f\klabig{y(t)} }_{s-1} \le C
\end{equation}
with a constant $C$ depending on $M$, $s$ and $p$.
\end{subequations}
\end{proposition}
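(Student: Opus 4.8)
The plan is to differentiate $f(y(t))$ using the product (Leibniz) rule for the discrete convolution and then estimate each resulting term with Proposition~\ref{prop-algebra}. Since $f(y) = y \conv \dotsm \conv y$ ($p$ factors), the first derivative is the sum over $i=1,\dots,p$ of the convolution product with the $i$-th factor replaced by $\dot{y}$, i.e. a sum of $p$ terms each of the form $y\conv\dotsm\conv\dot{y}\conv\dotsm\conv y$ with $p-1$ copies of $y$ and one copy of $\dot y$. For \eqref{eq-deriv1-nonlinearity} I would estimate such a term in $\norm{\cdot}_s$ by applying Proposition~\ref{prop-algebra}(i) repeatedly: peel off the $p-2$ factors $y$ one at a time using $\si=\si'=s$ (legitimate since $s\ge 0$ and, for the case $s=0$, one uses $\si'\ge 1$ by instead taking $\si'=s+1$ and noting $\norm{y}_{s+1}\le M$), reducing to $\norm{y\conv\dot{y}}_s$ or $\norm{\dot{y}\conv y}_s$, and then bound this last convolution by $C\norm{y}_{s+1}\norm{\dot{y}}_s$ using part~(i) with $\si=s$, $\si'=s+1$ (or symmetrically), which is $\le CM^2$. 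Summing the $p$ terms gives the claimed bound.

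For the second derivative \eqref{eq-deriv2-nonlinearity}, differentiating again produces two kinds of terms: those with two factors replaced by $\dot{y}$ (there are $\binom{p}{2}$ of these, up to ordering) and those with one factor replaced by $\ddot{y}$ ($p$ of these). For a term of the first kind, $y\conv\dotsm\conv\dot{y}\conv\dotsm\conv\dot{y}\conv\dotsm\conv y$, I would peel off the $p-2$ factors $y$ in $\norm{\cdot}_{s-1}$ — but care is needed here since $s-1$ may be negative, so I would instead first note that the two $\dot{y}$'s and the $y$'s can be grouped so that the innermost convolution is $\dot{y}\conv\dot{y}$, estimate $\norm{\dot{y}\conv\dot{y}}_{s-1}\le C\norm{\dot{y}}_s^2$ by Proposition~\ref{prop-algebra}(ii) with $\si=s-1\ge -1$, and then reintroduce the $p-2$ factors $y$ one at a time using part~(i) with $\si=s-1$ and $\si'=s+1$ (valid because $s+1\ge|s-1|$ and $s+1\ge 1$), each costing a factor $C\norm{y}_{s+1}\le CM$. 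For a term of the second kind, $y\conv\dotsm\conv\ddot{y}\conv\dotsm\conv y$, I would peel off $p-2$ of the $y$-factors by part~(i) with $\si=s-1$, $\si'=s+1$, reducing to $\norm{y\conv\ddot{y}}_{s-1}$, which part~(i) bounds by $C\norm{y}_{s+1}\norm{\ddot{y}}_{s-1}\le CM^2$ (here $\si=s-1$, $\si'=s+1$ satisfy $\si'\ge|\si|$, $\si'\ge 1$). Summing all $O(p^2)$ terms yields \eqref{eq-deriv2-nonlinearity}.

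The only mild subtlety — and the step I would be most careful about — is the bookkeeping of which Sobolev exponents to feed into Proposition~\ref{prop-algebra} at each peeling step so that its hypotheses ($\si'\ge|\si|$ and $\si'\ge1$ for part~(i), $\si\ge-1$ for part~(ii)) remain satisfied throughout, particularly in the borderline regimes $s=0$ (where $\norm{\cdot}_s=\norm{\cdot}_0$ does not dominate a convolution on its own and one must trade up to $\norm{\cdot}_{s+1}$ using the available bound $\norm{y}_{s+1}\le M$) and in handling the negative index $s-1$ in the second-derivative estimate (where one must ensure the innermost convolution carries the lowest index $s-1\ge-1$ and all reintroduced $y$-factors are measured in $\norm{\cdot}_{s+1}$). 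Everything else is a routine finite sum of applications of the two parts of Proposition~\ref{prop-algebra}, with the constant $C$ depending only on $M$, $s$, and $p$ as claimed.
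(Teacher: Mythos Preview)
Your proposal is correct and follows essentially the same approach as the paper: differentiate $f(y(t))$ by the Leibniz rule for convolution and estimate each term by repeated use of Proposition~\ref{prop-algebra}, invoking part~(ii) for the $\dot{y}\conv\dot{y}$ factor and part~(i) with $\si'=s+1$ for all $y$-factors. The only streamlining the paper makes is to take $\si'=s+1$ from the start in the first estimate (rather than $\si'=s$ with a separate fix for small $s$), which avoids the case distinction you flag as a subtlety.
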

\begin{proof}
The first estimate~\eqref{eq-deriv1-nonlinearity} follows from Proposition~\ref{prop-algebra} (i) applied $p-1$ times with $\si'=s+1$ and $\si=s$ to
\[
\frac{\drm}{\drm t}f(y(t)) = p\, \underbrace{y(t)\conv\dotsm\conv y(t)}_{\text{$p-1$ times}} \,\conv\, \dot{y}(t) . 
\]
The second estimate~\eqref{eq-deriv2-nonlinearity} follows from Proposition~\ref{prop-algebra} (i) applied with $\si'=s+1$ and $\si=s-1$ and from Proposition~\ref{prop-algebra} (ii) applied with $\si=s-1$ to
\[
\frac{\drm^2}{\drm t^2}f(y(t)) = p \, \underbrace{y(t)\conv\dotsm\conv y(t)}_{\text{$p-1$ times}} \,\conv\, \ddot{y}(t) + p(p-1) \, \underbrace{y(t)\conv\dotsm\conv y(t)}_{\text{$p-2$ times}} \,\conv\, \dot{y}(t)\conv \dot{y}(t)
\]
in a similar way as in the proof of the first estimate~\eqref{eq-deriv1-nonlinearity}.
\end{proof}

\subsection{Proof of the lower order error bounds in higher order Sobolev spaces}\label{subsec-proof1}

We give the proof of Theorem~\ref{thm-main} for $-1\le \al\le 0$, assuming throughout that $h\le 1$. The proof follows the classical scheme of Lady Windermere's fan based on  a local error bound in Proposition~\ref{prop-local-uncond} below and a stability estimate in Proposition~\ref{prop-stab-uncond}. 

We will make use of the norm
\[
\normv{(y,\dot{y})}_{\si} = \bigl(\norm{y}_{\si+1}^2 + \norm{\dot{y}}_{\si}^2 \bigr)^{1/2},
\]
on $H^{\si+1}\times H^\si$ for various values of $\si\in\R$. We denote throughout by $(y(t),\dot{y}(t))$ the solution~\eqref{eq-voc} of the system~\eqref{eq-nlw-semi} and by $(y^0,\dot{y}^0),(y^1,\dot{y}^1),\ldots$ its numerical approximation~\eqref{eq-trigo}. 

Before studying local error and stability of the numerical method~\eqref{eq-trigo}, we prove the following lemma on the preservation of regularity of the numerical solution over one time step.

\begin{lemma}\label{lemma-reg}
Let $s\ge 0$ and $-1\le \al\le 0$, and assume that the filter functions satisfy Assumption~\ref{assum-filter} for $\be=\al$ with constant $c$. If 
\[
\normvbig{\klabig{y^0,\dot{y}^0}}_{s} \le M,
\]
then
\[
\norm{y^1}_{s+1} \le C
\]
with a constant $C$ depending on $M$, $s$, $p$ and $c$.
\end{lemma}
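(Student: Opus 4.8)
The plan is to read off $y^1$ from the first component of the one-step formula~\eqref{eq-trigo}, namely
\[
y^1 = \cos(h\Om) y^0 + h\sinc(h\Om)\dot{y}^0 + \tfrac12 h^2 \Psi f(\Phi y^0),
\]
and to estimate each of the three terms in $\norm{\cdot}_{s+1}$ separately. The first term is immediate: since $\abs{\cos(\xi)}\le 1$ on all of $\R$, we get $\norm{\cos(h\Om)y^0}_{s+1}\le\norm{y^0}_{s+1}\le M$, with no assumption on the filter functions needed. The second term is the crucial one for the loss of one derivative: here $\dot{y}^0$ is only controlled in $\norm{\cdot}_s$, and we recover the missing derivative from the factor $h\sinc(h\Om)$. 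On the frequency $j$ this factor equals $h\sinc(h\om_j)=\sin(h\om_j)/\om_j$ for $\om_j\ne 0$ (and equals $h$ for $\om_j=0$), which is bounded by $\min(h,1/\om_j)\le \skla{j}^{-1}$ up to a constant (using $h\le 1$). Hence $h\sinc(h\Om)$ maps $\norm{\cdot}_s$ into $\norm{\cdot}_{s+1}$ with norm $\le 1$, giving $\norm{h\sinc(h\Om)\dot{y}^0}_{s+1}\le \norm{\dot{y}^0}_s\le M$. (Note this step also requires no assumption on the filters — it is just the structure of $R(h)$.)

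For the nonlinear term I would argue as follows. First bound $\Phi y^0$: by~\eqref{eq-bound-filter-phi} the filter $\phi$ is bounded by $c$ (and $\phi(0)=1$), so $\norm{\Phi y^0}_{s+1}\le c\,\norm{y^0}_{s+1}\le cM$. Then apply Proposition~\ref{prop-nonlinearity}, estimate~\eqref{eq-nonlinearity}, with $\si'=\si=s+1$ (which satisfies $\si'\ge\abs{\si}$ and $\si'\ge 1$ since $s\ge 0$) to get $\norm{f(\Phi y^0)}_{s+1}\le C(cM,s,p)$. Finally I must absorb the factor $\tfrac12 h^2\Psi$. For $-1\le\al\le 0$, Assumption~\ref{assum-filter} gives~\eqref{eq-bound-filter-psi1}: $\abs{\psi(\xi)}\le c\,\xi^{\al}$ for $\xi=h\om_j$, $\om_j\ne 0$, while $\psi(0)=1$. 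Since $\al\le 0$, the power $\xi^\al=(h\om_j)^\al$ is at most $1$ when $h\om_j\ge 1$ and could be large when $h\om_j<1$; but in the latter regime we also have the trivial bound $\abs{\psi(\xi)}\le$ (something) — actually the cleanest route is to note $h^2\abs{\psi(h\om_j)}\le c\, h^{2+\al}\om_j^\al\le c\,h^{2+\al}\skla{j}^{\abs{\al}}$ is \emph{not} what we want; instead I would use $h^2\abs{\psi(h\om_j)}\le c\,h^{2+\al}\om_j^\al$ together with $h\le 1$ and $2+\al\ge 1$ to get $h^2\abs{\psi(h\om_j)}\le c\,h\cdot h^{1+\al}\om_j^\al = c\,h\,(h\om_j)^{1+\al}/\om_j \le c\,h\cdot\skla{j}^{\abs{\al}-1}$ — no, this is getting complicated. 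Let me restate: the simple observation is that $h^2\Psi$ as an operator from $\norm{\cdot}_{s+1}$ to $\norm{\cdot}_{s+1}$ has norm $\le \tfrac12 h^2 \sup_{j}\abs{\psi(h\om_j)} \le \tfrac12 c\, h^2 \sup_j (h\om_j)^\al$; for $j$ with $h\om_j\ge 1$ this is $\le\tfrac12 c h^2$, and for $h\om_j<1$ we have $\om_j<1/h$ so $\om_j\in\{0\}$ is not forced, but we can instead estimate $h^2(h\om_j)^\al = h^{2+\al}\om_j^\al \le h^{2+\al}(1/h)^{|\al|}\cdot$ — since when $h\om_j<1$, $\om_j^\al=\om_j^{-|\al|}$ which is $\le 1$ if $\om_j\ge 1$. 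Thus for all $\om_j\ge 1$ one has $\om_j^\al\le 1$, hence $h^2\abs{\psi(h\om_j)}\le c\,h^{2+\al}\le c$, while for $\om_j=0$, $h^2\abs{\psi(0)}=h^2\le 1$. So $\norm{\tfrac12 h^2\Psi f(\Phi y^0)}_{s+1}\le \tfrac12 c\,\norm{f(\Phi y^0)}_{s+1}\le C$. Collecting the three bounds gives $\norm{y^1}_{s+1}\le M + M + C = C(M,s,p,c)$, as claimed.

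\textbf{Main obstacle.} The one genuinely delicate point is handling the nonlinear term's prefactor $h^2\Psi$ correctly: one must check that, for the relevant range $-1\le\al\le 0$, the bound~\eqref{eq-bound-filter-psi1} on $\psi$ together with $h\le 1$ really yields a uniform (in $h$, in $K$, and in the frequency) bound on $h^2\psi(h\om_j)$ — the subtlety being the competition between the small factor $h^2$ and the possibly large factor $(h\om_j)^\al$ with negative exponent $\al$ in the low-frequency regime $h\om_j<1$. As sketched above, this competition is resolved using $\om_j\ge 1$ (so $\om_j^\al\le 1$) in that regime, so the factor is in fact bounded by $h^2\le 1$ there and by $c\,h^{2+\al}\le c$ in the complementary regime. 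Everything else — the $\cos$ and $\sinc$ terms, the bound on $\Phi y^0$, and the application of Proposition~\ref{prop-nonlinearity} — is routine. No cancellation or summation-over-steps argument is needed since this is a single-step statement.
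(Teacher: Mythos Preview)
Your proof is correct and follows essentially the same approach as the paper: split $y^1$ via the first line of~\eqref{eq-trigo}, bound the $\cos$ and $h\sinc$ terms trivially (the latter using $\abs{\sinc(\xi)}\le\xi^{-1}$ to gain one derivative), and control the nonlinear term via~\eqref{eq-bound-filter-phi},~\eqref{eq-bound-filter-psi1} and~\eqref{eq-nonlinearity} from Proposition~\ref{prop-nonlinearity}. The only cosmetic difference is in the $\Psi$-term, where the paper absorbs the factor $\om_j^{\al}$ into the norm (obtaining $\tfrac12 c\,h^{2+\al}\norm{f(\Phi y^0)}_{s+1+\al}$ and then using $\al\le 0$), which is equivalent to your observation that $\om_j^{\al}\le 1$ for $\om_j\ge 1$ but avoids the case-by-case discussion in your write-up.
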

\begin{proof}
We have, by the definition of the method~\eqref{eq-trigo},
\[
\norm{y^1}_{s+1} \le \normbig{ \cos(h\Om) y^0 }_{s+1} + h \normbig{ \sinc(h\Om) \dot{y}^0}_{s+1} + \sfrac12 h^2 \normbig{ \Psi f( \Phi y^0)}_{s+1}.
\]
We then use $\sinc(0)\le h^{-1}$, the bound $\abs{\sinc(\xi)}\le \xi^{-1}$ for $\xi>0$, the bound $\abs{\psi(\xi)} \le c \xi^\al$ for $\xi=h\om_j>0$ of~\eqref{eq-bound-filter-psi1} and $\psi(0)\le h^{\al}$ to get
\[
\norm{y^1}_{s+1} \le \norm{y^0}_{s+1} + \norm{ \dot{y}^0}_s + \sfrac12 c h^{2+\al} \normbig{ f( \Phi y^0)}_{s+1+\al}.
\]
The fact that $-1\le \al\le 0$, the bound \eqref{eq-bound-filter-phi} of $\phi$ and the estimate~\eqref{eq-nonlinearity} from Proposition~\ref{prop-algebra} with $\si'=s+1$ then imply the stated bound of $y^1$ in $H^{s+1}$.
\end{proof}

Now, we study the local error of the trigonometric integrator~\eqref{eq-trigo}.

\begin{proposition}[Local error in $H^{s+1-\al}\times H^{s-\al}$ for $-1\le\al\le 0$]\label{prop-local-uncond}
Let $s\ge 0$ and $-1\le\al\le 0$, and assume that the filter functions satisfy Assumption~\ref{assum-filter} for $\be=\al$ with constant $c$. If 
\[
\normvbig{\klabig{y(\ta),\dot{y}(\ta)}}_{s} \le M \myfor t_0\le \ta\le t_1,
\]
then
\[
\normvbig{\klabig{y(t_1),\dot{y}(t_1)} - \klabig{y^1,\dot{y}^1}}_{s-\al} \le C h^{2+\al}
\]
with a constant $C$ depending on $M$, $s$, $p$ and $c$.
\end{proposition}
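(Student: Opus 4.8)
\emph{Plan.} The plan is to expand the local error by comparing the variation‑of‑constants formula~\eqref{eq-voc} (after substituting $\ta=t_0+h\sigma$) with one step of~\eqref{eq-trigo} started from the exact data $(y^0,\dot y^0)=(y(t_0),\dot y(t_0))$. The $R(h)$‑contributions of the initial value cancel, so that
\begin{align*}
y(t_1)-y^1 &= h^2\int_0^1 (1-\sigma)\sinc(h(1-\sigma)\Om)\, f(y(t_0+h\sigma))\,\drm\sigma - \sfrac12 h^2\Psi f(\Phi y^0),\\
\dot y(t_1)-\dot y^1 &= h\int_0^1 \cos(h(1-\sigma)\Om)\, f(y(t_0+h\sigma))\,\drm\sigma - \sfrac12 h\Psi_0 f(\Phi y^0) - \sfrac12 h\Psi_1 f(\Phi y^1),
\end{align*}
and I would then bound the first component in $H^{s+1-\al}$ and the second in $H^{s-\al}$.

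First I would collect the ingredients. From $\norm{y(\ta)}_{s+1}\le M$ and $\abs{\phi(\xi)}\le c$ (by~\eqref{eq-bound-filter-phi}), Proposition~\ref{prop-nonlinearity} gives $\norm{f(y(\ta))}_{s+1}\le C$ and $\norm{f(\Phi y^0)}_{s+1}\le C$; Lemma~\ref{lemma-reg} gives $\norm{y^1}_{s+1}\le C$, hence $\norm{f(\Phi y^1)}_{s+1}\le C$; and Proposition~\ref{prop-nonlinearity2} gives $\norm{q(\sigma)}_s\le Ch$ for $q(\sigma):=\frac{\drm}{\drm\sigma}f(y(t_0+h\sigma))$. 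Besides the filter bounds $\abs{\psi(\xi)}\le c\xi^\al$ from~\eqref{eq-bound-filter-psi1} and $\abs{1-\chi(\xi)}\le c\xi^{1+\al}$ for $\chi=\phi,\psi_0,\psi_1$ from~\eqref{eq-bound-filter-chi}, I would use the elementary bounds $\abs{\sinc(\xi)}\le\xi^\al$, $\abs{1-\sinc(\xi)}\le C\xi^{1+\al}$, $\abs{1-\cos(\xi)}\le C\xi^{1+\al}$, valid for $-1\le\al\le 0$ and $\xi>0$; since $\om_j=\skla j$ for $j\ne 0$ and the mode $j=0$ is harmless ($\sinc(0)=\psi(0)=1$ and $1-\sinc(0)=1-\cos(0)=1-\chi(0)=0$), each of these becomes an operator bound between the relevant Sobolev spaces. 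For the first component this already suffices: $\norm{(1-\sigma)\sinc(h(1-\sigma)\Om)f(y(t_0+h\sigma))}_{s+1-\al}\le Ch^\al(1-\sigma)^{1+\al}$ and $\norm{\sfrac12 h^2\Psi f(\Phi y^0)}_{s+1-\al}\le Ch^{2+\al}$, so integrating in $\sigma$ (the exponent $1+\al\ge 0$ keeping the integral finite) gives $\norm{y(t_1)-y^1}_{s+1-\al}\le Ch^{2+\al}$.

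The delicate part is the velocity component, where $f(y(t_0+h\sigma))$ and $q(\sigma)$ are controlled only in $H^{s+1}$ and $H^s$ but the target space $H^{s-\al}$ lies, for $\al<0$, strictly \emph{above} $H^s$; the naive triangle‑inequality bound is then only $O(h)$, and there is no trapezoidal‑type cancellation available in $H^{s-\al}$ because $\frac{\drm}{\drm t}f(y(\cdot))$ lives in $H^s$ only. The key step is an integration by parts in $\sigma$ using $\cos(h(1-\sigma)\Om)=-\frac1{h\Om}\frac{\drm}{\drm\sigma}\sin(h(1-\sigma)\Om)$ on each mode $\om_j\ne 0$ (the mode $j=0$ being trivial), which yields
\[
\int_0^1 \cos(h(1-\sigma)\Om)\, f(y(t_0+h\sigma))\,\drm\sigma = \sinc(h\Om)f(y^0) + h\int_0^1 (1-\sigma)\sinc(h(1-\sigma)\Om)\, q(\sigma)\,\drm\sigma.
\]
Now the rough factor $q(\sigma)$ carries an extra power of $h$ and sits under $\sinc(h(1-\sigma)\Om)$, which gains regularity, so exactly as for the first component this $q$‑term contributes only $O(h^{2+\al})$ to $\dot y(t_1)-\dot y^1$ in $H^{s-\al}$. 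It remains to estimate $h\sinc(h\Om)f(y^0)-\sfrac12 h\Psi_0 f(\Phi y^0)-\sfrac12 h\Psi_1 f(\Phi y^1)$ in $H^{s-\al}$: writing $\Psi_i=\Id-(\Id-\Psi_i)$ and $\Phi=\Id-(\Id-\Phi)$ and using~\eqref{eq-bound-filter-chi}, the Lipschitz estimate~\eqref{eq-lipschitz} with $\si=s-\al$, $\si'=s+1$ (its hypotheses holding by the bounds above), the bound $\norm{y^1-y(t_1)}_{s-\al}\le\norm{y^1-y(t_1)}_{s+1-\al}\le Ch^{2+\al}$ just obtained, and $\norm{y(t_1)-y^0}_{s-\al}\le Ch^{1+\al}$ (immediate from the expression for $y(t_1)-y^0$ and the multiplier bounds), I would successively replace $\Psi_0 f(\Phi y^0)$ by $f(y^0)$, $\Psi_1 f(\Phi y^1)$ by $f(y(t_1))$, and $f(y(t_1))$ by $f(y^0)$, each at the cost of $Ch^{1+\al}$ in $H^{s-\al}$. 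The surviving main term $h(\sinc(h\Om)-\Id)f(y^0)$ is $O(h^{2+\al})$ in $H^{s-\al}$ by $\abs{1-\sinc(\xi)}\le C\xi^{1+\al}$ and $\norm{f(y^0)}_{s+1}\le C$, whence $\norm{\dot y(t_1)-\dot y^1}_{s-\al}\le Ch^{2+\al}$.

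The main obstacle, and the only point needing more than bookkeeping, is the regularity mismatch above: the velocity error is measured one derivative above the space in which $\dot y$—hence $\frac{\drm}{\drm t}f(y(\cdot))$—is assumed bounded, and the quadrature in~\eqref{eq-trigo} offers no cancellation compensating in that stronger norm. The integration by parts in $\sigma$ is precisely what circumvents this, converting ``$\cos(h(1-\sigma)\Om)$ acting on $f(y(t_0+h\sigma))$'' (no gain of regularity) into ``$\sinc(h(1-\sigma)\Om)$ acting on $q(\sigma)$, with an extra factor $h$'' (one derivative gained against the one lost in passing to $q$, with a harmless factor $h^{\al}$). All constants occurring above can be taken uniform in $-1\le\al\le 0$, so they depend only on $M$, $s$, $p$ and $c$.
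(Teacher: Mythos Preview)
Your argument is correct (modulo a harmless typo: in the integration-by-parts identity the integral term has no extra factor~$h$; the factor~$h$ you need comes from $\norm{q(\sigma)}_s\le Ch$, which you state correctly). The treatment of the position component $y(t_1)-y^1$ is essentially the same as the paper's.

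For the velocity component your route genuinely differs from the paper's. The paper splits
\[
\dot y(t_1)-\dot y^1 = \int_{t_0}^{t_1}\!\bigl(\cos((t_1-\tau)\Om)-\Id\bigr)f(y(\tau))\,\drm\tau + \Bigl[\text{trapezoidal error for }\!\int\! f(y)\Bigr] + \bigl[\text{filter corrections}\bigr],
\]
and handles the trapezoidal piece by the interpolation inequality $1\le\xi^{1+\al}+\xi^{\al}$ together with the first-order Peano kernel, obtaining $\norm{\cdot}_{s-\al}\le h^{1+\al}\norm{\cdot}_{s+1}+h^{\al}\norm{\cdot}_s\le Ch^{2+\al}$. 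You instead integrate by parts in~$\sigma$, turning $\int\cos(h(1-\sigma)\Om)f(y)\,\drm\sigma$ into the boundary term $\sinc(h\Om)f(y^0)$ plus $\int(1-\sigma)\sinc(h(1-\sigma)\Om)q(\sigma)\,\drm\sigma$; the regularity gain from $\sinc$ exactly compensates the loss in passing to $q$, and you close by comparing $h\sinc(h\Om)f(y^0)$ with $\sfrac12 h\Psi_0 f(\Phi y^0)+\sfrac12 h\Psi_1 f(\Phi y^1)$ via the filter bounds and the Lipschitz estimate. Both arguments use the same ingredients (Propositions~\ref{prop-nonlinearity}--\ref{prop-nonlinearity2}, Lemma~\ref{lemma-reg}, the filter assumptions); yours is arguably more direct and avoids the interpolation trick, while the paper's decomposition isolates a pure quadrature error and therefore extends more transparently to the case $0<\al\le 1$ treated later (where the second-order Peano kernel is brought in). Your claim that ``there is no trapezoidal-type cancellation available in $H^{s-\al}$'' is slightly too strong: the paper does exploit exactly that cancellation, just in $H^s$ and $H^{s+1}$ separately, and then interpolates.
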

\begin{proof}
Throughout the proof, we denote by $C$ a generic constant depending on $M$, $s$, $p$ and $c$. 

(a) The local error $y(t_1)-y^1$ is of the form
\begin{equation}\label{eq-local-error-y}
y(t_1)-y^1 = \int_{t_0}^{t_1} (t_1-\ta) \sinc\klabig{(t_1-\ta)\Om} f\klabig{y(\ta)} \,\drm\ta - \sfrac12 h^2 \Psi f\klabig{\Phi y(t_0)},
\end{equation}
see~\eqref{eq-voc} and~\eqref{eq-trigo}.
We estimate both terms on the right-hand side separately. Similarly as in the proof of Lemma~\ref{lemma-reg}, we use that $h^\al\ge 1$, that $\abs{\sinc(\xi)}\le \xi^{\al}$ for $\xi>0$ and that $\abs{\psi(\xi)}\le c\xi^\al$ for $\xi=h\om_j>0$ by~\eqref{eq-bound-filter-psi1} to get
\[
\normbig{y(t_1)-y^1}_{s+1-\al} \le h^{2+\al} \sup_{t_0\le\ta\le t_1} \normbig{f\klabig{y(\ta)}}_{s+1} + \sfrac12 c h^{2+\al} \normbig{f\klabig{\Phi y(t_0)}}_{s+1}.
\]
Together with~\eqref{eq-nonlinearity} from Proposition~\ref{prop-nonlinearity} with $\si'=s+1$ and the bound~\eqref{eq-bound-filter-phi} of $\Phi$, this yields
\begin{equation}\label{eq-prop-localerror-aux1}
\normbig{y(t_1)-y^1}_{s+1-\al} \le C h^{2+\al}.
\end{equation}

(b) The local error $\dot{y}(t_1)-\dot{y}^1$ is of the form
\[
\dot{y}(t_1)-\dot{y}^1 = \int_{t_0}^{t_1} \cos\klabig{(t_1-\ta)\Om} f\klabig{y(\ta)} \,\drm\ta - \sfrac12 h \Psi_0 f\klabig{\Phi y(t_0)} - \sfrac12 h \Psi_1 f\klabig{\Phi y^1}.
\]
We split it as follows:
\begin{subequations}\label{eq-local-error-doty-split}
\begin{align}
\dot{y}(t_1)-\dot{y}^1 &= \int_{t_0}^{t_1} \klaBig{ \cos\klabig{(t_1-\ta)\Om} - \Id} f\klabig{y(\ta)} \,\drm\ta \label{eq-local-error-doty-1}\\
 &\qquad + \int_{t_0}^{t_1} f\klabig{y(\ta)} \,\drm\ta - \sfrac12 h \klaBig{ f\klabig{y(t_0)} + f\klabig{y(t_1)} } \label{eq-local-error-doty-2}\\
 &\qquad + \sfrac12 h \klaBig{ f\klabig{y(t_0)} - f\klabig{\Phi y(t_0)} } + \sfrac12 h \klaBig{ f\klabig{y(t_1)} - f\klabig{\Phi y^1} } \label{eq-local-error-doty-3}\\
 &\qquad + \sfrac12 h \klabig{\Id-\Psi_0} f\klabig{ \Phi y(t_0)} + \sfrac12 h \klabig{\Id-\Psi_1} f\klabig{ \Phi y^1}. \label{eq-local-error-doty-4}
\end{align}
\end{subequations}
We then use $\abs{\cos(\xi)-1} = 2 \abs{\sin(\xi/2)}^2 \le 2^{-\al} \xi^{1+\al}$ and~\eqref{eq-nonlinearity} from Proposition~\ref{prop-nonlinearity} with $\si'=s+1$ to estimate the term on right-hand side of~\eqref{eq-local-error-doty-1}:
\[
\normbig{\text{term on right-hand side of~\eqref{eq-local-error-doty-1}}}_{s-\al} \le C h^{2+\al}.
\]
The second component~\eqref{eq-local-error-doty-2} of the local error $\dot{y}(t_1)-\dot{y}^1$ is estimated at first as follows:
\[
\normbig{\text{term~\eqref{eq-local-error-doty-2}}}_{s-\al} \le h^{1+\al} \normbig{\text{term~\eqref{eq-local-error-doty-2}}}_{s+1} + h^{\al} \normbig{\text{term~\eqref{eq-local-error-doty-2}}}_{s},
\]
since $1\le \xi^{1+\al} + \xi^{\al}$ for $\xi>0$. An application of~\eqref{eq-nonlinearity} from Proposition~\ref{prop-nonlinearity} with $\si'=s+1$ to all terms of~\eqref{eq-local-error-doty-2} yields an estimate $Ch$ in the norm $\norm{\cdot}_{s+1}$. For an estimate in the norm $\norm{\cdot}_{s}$, we note that~\eqref{eq-local-error-doty-2} is the quadrature error of the trapezoidal rule. With its first-order Peano kernel $K_1(\si) = \frac12 -\si$ we thus get 
\[
\normbig{\text{term~\eqref{eq-local-error-doty-2}}}_s = h^2 \normbigg{\int_0^1 K_1(\si) \frac{\drm }{\drm t} f\klabig{y(t_0+\si h)} \, \drm\si }_s \le Ch^2,
\]
where we have used~\eqref{eq-deriv1-nonlinearity} from Proposition~\ref{prop-nonlinearity2} in the last estimate. In summary, we thus have
\[
\normbig{\text{term~\eqref{eq-local-error-doty-2}}}_{s-\al} \le C h^{2+\al}. 
\]
For the third term~\eqref{eq-local-error-doty-3} we use Lemma~\ref{lemma-reg}, the bound~\eqref{eq-bound-filter-phi} of $\phi$ and the estimate~\eqref{eq-lipschitz} from Proposition~\ref{prop-nonlinearity} with $\si=s-\al$ and $\si'=s+1$. This yields
\[
\normbig{\text{term~\eqref{eq-local-error-doty-3}}}_{s-\al} \le Ch \Bigl( \normbig{ (\Id-\Phi) y(t_0) }_{s-\al} + \normbig{y(t_1)-y^1}_{s-\al} + \normbig{(\Id-\Phi) y^1}_{s-\al} \Bigr),
\]
where we have split in addition $y(t_1)-\Phi y^1= (y(t_1)-y^1)+(y^1-\Phi y^1)$.
We then use the bound~\eqref{eq-bound-filter-chi} of $1-\phi$, the above local error bound~\eqref{eq-prop-localerror-aux1} of $y(t_1)-y^1$ (note that $\norm{z}_{s-\al}\le \norm{z}_{s+1-\al}$ for $z\in\C^\disc$) and Lemma~\ref{lemma-reg} to get 
\[
\normbig{\text{term~\eqref{eq-local-error-doty-3}}}_{s-\al} \le Ch^{2+\al}.
\]
For the last term~\eqref{eq-local-error-doty-4} we similarly use the bounds~\eqref{eq-bound-filter-chi} of $1-\psi_0$ and $1-\psi_1$, the bound~\eqref{eq-bound-filter-phi} of $\phi$, Lemma~\ref{lemma-reg} and~\eqref{eq-nonlinearity} from Proposition~\ref{prop-nonlinearity} with $\si'=s+1$ to get
\[
\normbig{\text{term~\eqref{eq-local-error-doty-4}}}_{s-\al} \le Ch^{2+\al}.
\]
Putting all these estimates of the single terms in~\eqref{eq-local-error-doty-split} together yields the claimed local error bound of order $2+\al$ for $\norm{\dot{y}(t_1) - \dot{y}^1}_{s-\al}$.
\end{proof}

\begin{proposition}[Stability in $H^{s+1-\al}\times H^{s-\al}$ for $-1\le\al\le0$]\label{prop-stab-uncond}
Let $s\ge 0$ and $-1\le\al\le0$, and assume that the filter functions satisfy Assumption~\ref{assum-filter} for $\be=\al$ with constant $c$. We consider the trigonometric integrator~\eqref{eq-trigo} with different initial values $(y^0,\dot{y}^0)$ and $(z^0,\dot{z}^0)$. If 
\[
\normvbig{\klabig{y^0,\dot{y}^0}}_{s}\le M \myand \normvbig{\klabig{z^0,\dot{z}^0}}_{s}\le M,
\]
then 
\[
\normvbig{\klabig{y^{1},\dot{y}^1} - \klabig{z^{1},\dot{z}^1}}_{s-\al} \le  \bigl( 1 + C h \bigr) \normvbig{\bigl(y^{0},\dot{y}^0\bigr) - \bigl(z^{0},\dot{z}^0\bigr)}_{s-\al}
\]
with a constant $C$ depending on $M$, $s$, $p$ and $c$. 
\end{proposition}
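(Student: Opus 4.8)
I would prove this in the spirit of the local error estimate (Proposition~\ref{prop-local-uncond}): write the recursion satisfied by the difference of the two numerical solutions and estimate it in the norm $\normv{\cdot}_{s-\al}$, with $C$ denoting throughout a generic constant depending only on $M$, $s$, $p$ and $c$. Setting $e^0=y^0-z^0$, $\dot{e}^0=\dot{y}^0-\dot{z}^0$ and likewise $e^1,\dot{e}^1$, and subtracting the two instances of~\eqref{eq-trigo}, one gets
\begin{equation*}
\begin{pmatrix} e^1\\ \dot{e}^1\end{pmatrix} = R(h)\begin{pmatrix} e^0\\ \dot{e}^0\end{pmatrix} + \begin{pmatrix} \sfrac12 h^2\Psi\bigl(f(\Phi y^0)-f(\Phi z^0)\bigr)\\ \sfrac12 h\Psi_0\bigl(f(\Phi y^0)-f(\Phi z^0)\bigr) + \sfrac12 h\Psi_1\bigl(f(\Phi y^1)-f(\Phi z^1)\bigr)\end{pmatrix},
\end{equation*}
and the task reduces to estimating the linear propagator term and the two nonlinear increments separately.

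For the linear propagator I would establish $\normv{R(h)(v,\dot{v})}_{s-\al}\le(1+Ch)\normv{(v,\dot{v})}_{s-\al}$ by a componentwise computation in the Fourier basis. After the rescaling $v_j\mapsto\om_j v_j$, the $2\times2$ block of $R(h)$ attached to an index $j$ with $\om_j\ne0$ is orthogonal, hence leaves $\skla{j}^{2(s+1-\al)}\absbig{v_j}^2+\skla{j}^{2(s-\al)}\absbig{\dot{v}_j}^2$ invariant; for $j=0$ the block maps $(v_0,\dot{v}_0)$ to $(v_0+h\dot{v}_0,\dot{v}_0)$, which together with $h\le1$ produces the factor $1+Ch$.

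For the nonlinear increments I would first record the regularity of the arguments of $f$: by~\eqref{eq-bound-filter-phi} we have $\norm{\Phi y^0}_{s+1},\norm{\Phi z^0}_{s+1}\le cM$, and by Lemma~\ref{lemma-reg} together with~\eqref{eq-bound-filter-phi} also $\norm{\Phi y^1}_{s+1},\norm{\Phi z^1}_{s+1}\le C$, so that the Lipschitz estimate~\eqref{eq-lipschitz} of Proposition~\ref{prop-nonlinearity} applies with $\si'=s+1$ (and the relevant exponents $\si$, since $\abs{s+1+\al}\le s+1$ for $-1\le\al\le0$). Because $e^1=y^1-z^1$ is given \emph{explicitly} by the first line of the recursion, I would next bound $\norm{e^1}_{s+1}$ by the very argument of Lemma~\ref{lemma-reg}, using $\absbig{\sinc(\xi)}\le\xi^{-1}$, $\absbig{\psi(\xi)}\le c\xi^{\al}$ for $\xi=h\om_j>0$ from~\eqref{eq-bound-filter-psi1}, $\psi(0)\le h^{\al}$, $\absbig{\phi(\xi)}\le c$ and~\eqref{eq-lipschitz} with $\si=s+1+\al$, to obtain $\norm{e^1}_{s+1}\le C\bigl(\norm{e^0}_{s+1}+\norm{\dot{e}^0}_{s}\bigr)\le C\normv{(e^0,\dot{e}^0)}_{s-\al}$. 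The position increment is handled with the bound $\norm{\Psi v}_{s+1-\al}\le ch^{\al}\norm{v}_{s+1}$ (the Fourier computation of Lemma~\ref{lemma-reg}, with $s$ replaced by $s-\al$) followed by~\eqref{eq-lipschitz} with $\si=\si'=s+1$, which gives the bound $Ch^{2+\al}\norm{e^0}_{s+1}\le Ch\normv{(e^0,\dot{e}^0)}_{s-\al}$ since $2+\al\ge1$ and $h\le1$. For the two velocity increments I would note that~\eqref{eq-bound-filter-chi} yields $\absbig{\psi_0(\xi)},\absbig{\psi_1(\xi)}\le1+c\xi^{1+\al}$ and hence $\norm{\Psi_0 v}_{s-\al},\norm{\Psi_1 v}_{s-\al}\le C\norm{v}_{s+1}$, because $\skla{j}^{s-\al}\om_j^{1+\al}=\skla{j}^{s+1}$ for $\om_j\ne0$; combining this with~\eqref{eq-lipschitz} for $\si=\si'=s+1$ and the bound on $\norm{e^1}_{s+1}$ gives the bound $Ch\normv{(e^0,\dot{e}^0)}_{s-\al}$ for the velocity increment. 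Adding the three contributions yields $\normv{(e^1,\dot{e}^1)}_{s-\al}\le(1+Ch)\normv{(e^0,\dot{e}^0)}_{s-\al}$.

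The main obstacle I foresee is exactly that for $\al<0$ the filter $\psi$ occurring in the position update is unbounded near $\xi=0$, so $\Psi$ is not a bounded operator on $H^{s-\al}$ and one cannot estimate the position increment naively. The device that resolves this is the inequality $\norm{\Psi v}_{s+1-\al}\le ch^{\al}\norm{v}_{s+1}$, which converts the unboundedness of $\psi$ into a loss of a factor $h^{\al}$ that is over-compensated by the $h^2$ in front of the increment; similarly, the possible growth $\om_j^{1+\al}$ of $\psi_0$ and $\psi_1$ when $\al>-1$ is absorbed into the Sobolev exponent rather than costing a power of $h$. A minor point that must be handled correctly---but that causes no real difficulty---is the implicitness of~\eqref{eq-trigo} in $\dot{y}^{n+1}$: since $y^{n+1}$, and therefore $e^1$, is produced explicitly by the position update, the bound on $\norm{e^1}_{s+1}$ is available before the velocity increment is estimated, so no fixed-point or absorption argument is needed.
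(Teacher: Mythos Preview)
Your proof is correct and follows essentially the same route as the paper's: the same subtraction of the two instances of~\eqref{eq-trigo}, the same treatment of $R(h)$ via the $j=0$ mode (this is exactly the paper's identity~\eqref{eq-normR}), the same use of $\abs{\psi(\xi)}\le c\xi^{\al}$ for the position increment and of $\abs{\psi_0(\xi)},\abs{\psi_1(\xi)}\le 1+c\xi^{1+\al}$ for the velocity increments, and the same device of bounding $e^1=y^1-z^1$ from the explicit first line of the scheme before handling the $\Psi_1$ term. The only cosmetic differences are that the paper splits the $\Psi_0$ contribution into a $\norm{\cdot}_{s-\al}$ part and an $h^{1+\al}\norm{\cdot}_{s+1}$ part (you absorb both into $C\norm{\cdot}_{s+1}$, which is fine since $s-\al\le s+1$), and that the paper bounds $\norm{e^1}_{s+1-\al}$ rather than $\norm{e^1}_{s+1}$; neither difference affects the argument or the final estimate.
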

\begin{proof}
We first study the behaviour of $R(h)$ under the norm $\normv{\cdot}_{\si}$. For
\[
\begin{pmatrix} w \\ \dot{w} \end{pmatrix}
 = R(h)
\begin{pmatrix} v\\ \dot{v} \end{pmatrix},
\]
we have
\begin{equation}\label{eq-normR}
\normv{(w,\dot{w})}_{\si} = \normvbig{(v,\dot{v}) + h(\widetilde{v},0)}_{\si}, 
\end{equation}
where $\widetilde{v}\in\C^\disc$ is zero except in its component with index $0$ in which it takes the value $\dot{v}_0$, i.e., $\widetilde{v}_j=\delta_{j,0} \dot{v}_j$ with the Kronecker delta. 
This shows that
\begin{subequations}\label{eq-stab}
\begin{align}
\normvbig{\bigl(y^1,\dot{y}^1\bigr) - \bigl(z^1,\dot{z}^1\bigr)}_{s-\al} &\le \normvbig{\bigl(y^0,\dot{y}^0\bigr) - \bigl(z^0,\dot{z}^0\bigr)}_{s-\al}\notag\\
 &\qquad + h \absbig{ \dot{y}_0^0 - \dot{z}_0^0 }\label{eq-stab0}\\
 &\qquad + \sfrac12 h^2 \normbig{\Psi \bigl(f(\Phi y^0) - f(\Phi z^0)\bigr)}_{s+1-\al}\label{eq-stab1}\\
 &\qquad + \sfrac12 h \normbig{\Psi_0 \bigl(f(\Phi y^0) - f(\Phi z^0)\bigr)}_{s-\al}\label{eq-stab2}\\
 &\qquad + \sfrac12 h \normbig{\Psi_1 \bigl(f(\Phi y^1) - f(\Phi z^1)\bigr)}_{s-\al}\label{eq-stab3}.
\end{align}
\end{subequations}
We estimate the terms~\eqref{eq-stab0}--\eqref{eq-stab3} separately. We have
\[
\text{term~\eqref{eq-stab0}} \le h \normbig{\dot{y}^0-\dot{z}^0}_{s-\al}
\]
Using the bound~\eqref{eq-bound-filter-psi1} of $\psi$, the estimate~\eqref{eq-lipschitz} from Proposition~\ref{prop-nonlinearity} with $\si=\si'=s+1$ and the bound~\eqref{eq-bound-filter-phi} of $\phi$ shows that
\[
\text{term~\eqref{eq-stab1}} \le C h^{2+\al} \normbig{y^0-z^0}_{s+1}.
\]
For the term~\eqref{eq-stab2} we get
\[
\text{term~\eqref{eq-stab2}} \le C h \normbig{y^0-z^0}_{s-\al} + Ch^{2+\al} \normbig{y^0-z^0}_{s+1},
\]
where we have used~\eqref{eq-bound-filter-chi} to estimate $\abs{\psi_0(\xi)}\le 1 + c \xi^{1+\al}$ for $\xi=h\om_j$, the estimate~\eqref{eq-lipschitz} from Proposition~\ref{prop-nonlinearity} with $\si=s-\al$ and $\si'=s+1$, the same estimate with $\si=\si'=s+1$ and the bound~\eqref{eq-bound-filter-phi} of $\phi$. Using in addition Lemma~\ref{lemma-reg}, we get for the term~\eqref{eq-stab3} the same estimate but with $y^1$ and $z^1$ instead of $y^0$ and $z^0$ on the right-hand side:
\[
\text{term~\eqref{eq-stab3}} \le C h \normbig{y^1-z^1}_{s-\al} + Ch^{2+\al} \normbig{y^1-z^1}_{s+1} \le 2C h \normbig{y^1-z^1}_{s+1-\al}.
\]
We then use
\[
\normbig{y^1-z^1}_{s+1-\al} \le \normbig{\cos(h\Om) \klabig{y^0-z^0} }_{s+1-\al} + h \normbig{\sinc(h\Om) \klabig{\dot{y}^0-\dot{z}^0} }_{s+1-\al} + \text{term~\eqref{eq-stab1}}
\]
and $\sinc(\xi)\le \xi^{-1}$ for $\xi>0$ to get
\[
\text{term~\eqref{eq-stab3}} \le C h \normbig{y^0-z^0}_{s+1-\al} + C h \normbig{\dot{y}^0-\dot{z}^0}_{s-\al} + C h^{3+\al}\normbig{y^0-z^0}_{s+1}. 
\]
Taking into account that $\al\le 0$, these estimates of~\eqref{eq-stab1}--\eqref{eq-stab3} prove the stability estimate of the proposition.
\end{proof}

We finally put the results of Propositions~\ref{prop-local-uncond} and~\ref{prop-stab-uncond} together to prove Theorem~\ref{thm-main} for $-1\le\al\le0$.

\begin{proof}[Proof of Theorem~\ref{thm-main} for $-1\le\al\le0$]
(a) We first consider the case $\al=0$. Let $C_1$ be the constant of Proposition~\ref{prop-local-uncond} for $\al=0$, and let $C_2$ be the constant of Proposition~\ref{prop-stab-uncond} for $\al=0$ and with $2M$ instead of $M$. We set $h_0=M/(C_1T\e^{C_2T})$.

We show, for time step-sizes $h\le h_0$, by induction on $n=0,\ldots$ that
\begin{equation}\label{eq-proof-main1-aux}
\normvbig{ \bigl(y^n,\dot{y}^n\bigr) - \bigl(y(t_n),\dot{y}(t_n)\bigr) }_{s} \le C_1 \e^{C_2 nh} nh^2
\end{equation}
as long as $t_n-t_0=nh\le T$. The case $n=0$ is clear. For $n>0$, the induction hypothesis implies for $h\le h_0$ that
\[
\normvbig{ \bigl(y^{n-1},\dot{y}^{n-1}\bigr) }_{s} \le M + C_1 \e^{C_2 T} T h \le 2M
\]
as long as $t_{n-1}-t_0=(n-1)h\le T$. This allows us to apply Propositions~\ref{prop-local-uncond} and~\ref{prop-stab-uncond} to
\begin{align*}
\normvbig{ \bigl(y^n,\dot{y}^n\bigr) - \bigl(y(t_n),\dot{y}(t_n)\bigr) }_{s} &\le
 \normvbig{ \meth\bigl(y^{n-1},\dot{y}^{n-1}\bigr) - \meth\bigl(y(t_{n-1}),\dot{y}(t_{n-1})\bigr)}_{s}\\
 &\quad + \normvbig{ \meth\bigl(y(t_{n-1}),\dot{y}(t_{n-1})\bigr) - \bigl(y(t_n),\dot{y}(t_n)\bigr) }_{s},
\end{align*}
where we denote by $\meth$ one time step with the trigonometric integrator~\eqref{eq-trigo}. Together with the induction hypothesis, this proves~\eqref{eq-proof-main1-aux} (and hence the statement of Theorem~\ref{thm-main} for $\al=0$).

(b) Now, let $-1\le\al<0$, and let $h_0$ be as above. Let further $C_1$ and $C_2$ be as above but for the new $\al$ instead of $\al=0$. We know from the above proof for the case $\al=0$ that $\normv{ (y^{n-1},\dot{y}^{n-1}) }_{s} \le 2M$ as long as $t_{n-1}-t_0\le T$. This allows us to apply Propositions~\ref{prop-local-uncond} and~\ref{prop-stab-uncond} as in part (a) of the proof to show that
\[
\normvbig{ \bigl(y^n,\dot{y}^n\bigr) - \bigl(y(t_n),\dot{y}(t_n)\bigr) }_{s-\al} \le C_1 \e^{C_2 nh} n h^{2+\al}
\]
as long as $t_n-t_0=nh\le T$. 
\end{proof}

As the above proof of Theorem~\ref{thm-main} for $\al=0$ shows, the numerical solutions stays, under the conditions of this theorem, bounded in $H^{s+1}\times H^s$,
\begin{equation}\label{eq-regularity}
\normvbig{ \bigl(y^{n},\dot{y}^{n}\bigr) }_{s} \le 2M \myfor 0\le t_n-t_0=nh \le T.
\end{equation}
This regularity of the numerical solution is essential for the proof of Theorem~\ref{thm-main} for $0<\al\le 1$ in the next subsection. Note that such an estimate cannot be obtained with the arguments of Lemma~\ref{lemma-reg} which are restricted to a bounded number of time steps.

\subsection{Proof of the higher order error bounds in lower order Sobolev spaces}

We now prove Theorem~\ref{thm-main} for $0<\al\le 1$. As in the case $-1\le\al\le 0$, we study the local error and the stability of the numerical method in Propositions~\ref{prop-local-cond} and~\ref{prop-stab-cond} below.

\begin{proposition}[Local error in $H^{s+1-\al}\times H^{s-\al}$ for $0< \al\le 1$]\label{prop-local-cond}
Let $s\ge 0$ and $0< \al\le 1$, and assume that the filter functions satisfy Assumption~\ref{assum-filter} for $\be=0$ and $\be=\al$ with constant $c$. If 
\[
\normvbig{\klabig{y(\ta),\dot{y}(\ta)}}_{s} \le M \myfor t_0\le \ta\le t_1,
\]
then
\[
\normvbig{\bigl(y(t_1),\dot{y}(t_1)\bigr) - \bigl(y^1,\dot{y}^1\bigr)}_{s-\al} \le C h^{2+\al}
\]
with a constant $C$ depending on $M$, $s$, $p$ and $c$. 
\end{proposition}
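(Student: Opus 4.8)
The structure mirrors the proof of Proposition~\ref{prop-local-uncond}, with the crucial difference that now $0<\al\le 1$, so the factor $\xi^\al$ attached to $\psi$ and to $\sinc$ works \emph{against} us in high frequencies rather than for us. The plan is to keep the same splittings of the local errors $y(t_1)-y^1$ and $\dot y(t_1)-\dot y^1$ as in~\eqref{eq-local-error-y} and~\eqref{eq-local-error-doty-split}, but to extract the extra power $h^{1+\al}$ from the \emph{smoothness in time} of $f(y(\cdot))$ via Peano kernels of the underlying quadrature rules, rather than from the filter estimates. Concretely, for the $y$-component the term $\int_{t_0}^{t_1}(t_1-\ta)\sinc((t_1-\ta)\Om) f(y(\ta))\,\drm\ta - \tfrac12 h^2\Psi f(\Phi y(t_0))$ should be compared to $\tfrac12 h^2 f(y(t_0))$ (the $\Om=0$, unfiltered version): the difference between $\Psi$ and $\Id$, and between $\Phi$ and $\Id$, costs $\xi^{1+\al}$ by~\eqref{eq-bound-filter-psi2} and~\eqref{eq-bound-filter-chi}, i.e.\ $h^{1+\al}$ applied to $f(y(t_0))$ or $f(\Phi y(t_0))$ measured one derivative higher, which is fine since $\norm{f(y(\ta))}_{s+1}\le C$ by~\eqref{eq-nonlinearity} with $\si'=s+1$; and the difference between the integral and $\tfrac12 h^2 f(y(t_0))$ is a quadrature error that one estimates by writing $f(y(\ta))$ and $\sinc((t_1-\ta)\Om)$ through their Taylor expansions and using~\eqref{eq-deriv1-nonlinearity} (and, where a second derivative in time is unavoidable, \eqref{eq-deriv2-nonlinearity} together with the equation $\ddot y=-\Om^2y+f(y)$ to control $\norm{\ddot y}_{s-1}$).

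For the $\dot y$-component I would reuse the four-term split~\eqref{eq-local-error-doty-1}--\eqref{eq-local-error-doty-4}. Term~\eqref{eq-local-error-doty-1}: $\abs{\cos\xi-1}\le 2\abs{\sin(\xi/2)}^2\le C\xi^{1+\al}$ for $0<\al\le 1$ as well (using $\abs{\sin t}\le\min(1,\abs t)$), so this term is again $O(h^{2+\al})$ via~\eqref{eq-nonlinearity} with $\si'=s+1$. Term~\eqref{eq-local-error-doty-2} is the trapezoidal-rule error, for which the second-order Peano kernel representation gives $h^3\int_0^1 K_2(\si)\tfrac{\drm^2}{\drm t^2}f(y(t_0+\si h))\,\drm\si$, bounded by $Ch^3$ in $\norm{\cdot}_{s-1}$ via~\eqref{eq-deriv2-nonlinearity}; interpolating with the $Ch^2$ bound in $\norm{\cdot}_s$ from~\eqref{eq-deriv1-nonlinearity} (using $\xi^{\al}\le\xi$ for $\xi\ge 1$ and $\xi^{\al}\le 1$ for $\xi\le 1$, i.e.\ $\xi^\al\le\max(1,\xi)$) yields $O(h^{2+\al})$ in $\norm{\cdot}_{s-\al}$. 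Terms~\eqref{eq-local-error-doty-3} and~\eqref{eq-local-error-doty-4} are handled exactly as before: \eqref{eq-bound-filter-chi} for $1-\phi$, $1-\psi_0$, $1-\psi_1$ gives the factor $h^{1+\al}$ applied to $f$ of bounded arguments (using Lemma~\ref{lemma-reg}, the $\be=0$ case of Assumption~\ref{assum-filter}, and the Lipschitz estimate~\eqref{eq-lipschitz}), hence $O(h^{2+\al})$; note that for~\eqref{eq-local-error-doty-3} one also needs the local error bound~\eqref{eq-prop-localerror-aux1} for $y(t_1)-y^1$ — now in the form just proved in part~(a) of this proof — to control $\norm{y(t_1)-y^1}_{s-\al}$.

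The one genuinely new issue, and the main obstacle, is that Lemma~\ref{lemma-reg} was proved only for $-1\le\al\le0$: it uses~\eqref{eq-bound-filter-psi1}, which is a statement about $\be\le 0$. For $0<\al\le 1$ we instead invoke Assumption~\ref{assum-filter} with $\be=0$ (which is assumed in the hypothesis of this proposition) to run the same one-step argument — $\abs{\psi(h\om_j)}\le c$ from~\eqref{eq-bound-filter-psi2} with $\be=0$ rewritten as $\abs{\psi}\le 1+c$, and $\abs{\phi}\le c$ from~\eqref{eq-bound-filter-phi} — giving $\norm{y^1}_{s+1}\le C$ just as in Lemma~\ref{lemma-reg} but with $\si'=s+1$ in~\eqref{eq-nonlinearity}; this costs only $h^2$, not $h^{2+\al}$, but that loss is harmless since it enters only inside already-$O(h)$ terms. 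The second delicate point is that $\tfrac{\drm^2}{\drm t^2}f(y(\cdot))$ requires a bound on $\ddot y$, which is not part of the hypothesis~\eqref{eq-finiteenergy}; one recovers it from $\ddot y=-\Om^2 y+f(y)$, so $\norm{\ddot y}_{s-1}\le\norm{y}_{s+1}+\norm{f(y)}_{s-1}\le M+C$, which is exactly the form in which~\eqref{eq-deriv2-nonlinearity} is stated. Everything else is a bookkeeping exercise identical to the $\al\le 0$ case, with every occurrence of "$\xi^\al\le 1$" replaced by "$\xi^\al\le\max(1,\xi)$" and the smoothing from $\Psi$, $\sinc$ replaced by smoothing from the Peano kernel.
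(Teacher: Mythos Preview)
Your proposal is correct and follows essentially the same approach as the paper: the same four-term split~\eqref{eq-local-error-doty-split} for $\dot y(t_1)-\dot y^1$ with the second-order Peano kernel and the interpolation $1\le\xi^\al+\xi^{-1+\al}$ for term~\eqref{eq-local-error-doty-2}, Lemma~\ref{lemma-reg} invoked with $\be=0$, and $\norm{\ddot y}_{s-1}$ recovered from the equation. The only minor difference is in part~(a), where the paper uses the closed-form identity $\int_{t_0}^{t_1}(t_1-\tau)\sinc((t_1-\tau)\Om)\,\drm\tau=\tfrac12 h^2\sinc^2(\tfrac12 h\Om)$ together with the smoothing bound $|\sinc(\xi)|\le\xi^{-1+\al}$ to obtain a clean three-term split of $y(t_1)-y^1$, which is a slightly sharper and more direct route than your Taylor-expansion sketch but amounts to the same decomposition.
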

\begin{proof}
The proof is similar to the proof of Proposition~\ref{prop-local-uncond}. We denote again by $C$ a generic constant depending only on $M$, $s$, $p$ and $c$. 

(a) We use $\int_{t_0}^{t_1} (t_1-\ta) \sinc((t_1-\ta)\Om) \,\drm\ta = \frac12 h^2 \sinc^2(\frac12 h \Om)$ to
split the local error $y(t_1)-y^1$ of~\eqref{eq-local-error-y} further as follows:
\begin{subequations}
\begin{align}
y(t_1)-y^1 &= \int_{t_0}^{t_1} (t_1-\ta) \sinc\klabig{(t_1-\ta)\Om} \klaBig{ f\klabig{y(\ta)} - f\klabig{y(t_0)} } \,\drm\ta \label{eq-local-error-y-1}\\
 &\qquad + \sfrac12 h^2 \sinc^2(\sfrac12 h\Om) \klaBig{ f\klabig{y(t_0)} - f\klabig{\Phi y(t_0)} } \label{eq-local-error-y-3}\\
 &\qquad + \sfrac12 h^2 \klabig{ \sinc^2(\sfrac12 h\Om) - \Psi } f\klabig{\Phi y(t_0)}. \label{eq-local-error-y-2}
\end{align}
\end{subequations}
For the term on the right-hand side of~\eqref{eq-local-error-y-1} we get
\[
\normbig{\text{term on right-hand side of~\eqref{eq-local-error-y-1}}}_{s+1-\al} \le Ch^{2+\al},
\]
where we have used $\abs{\sinc(\xi)}\le\xi^{-1+\al}$ for $\xi>0$, the estimate~\eqref{eq-lipschitz} from Proposition~\ref{prop-nonlinearity} with $\si=s$ and $\si'=s+1$ and $y(\ta)-y(t_0)=\int_{t_0}^\ta \dot{y}(\si)\,\drm\si$. With $\abs{\sinc(\xi)}^2 \le\xi^{-1}$ for $\xi>0$, the estimate~\eqref{eq-lipschitz} from Proposition~\ref{prop-nonlinearity} with $\si=s-\al$ and $\si'=s+1$, the bound~\eqref{eq-bound-filter-phi} of $\phi$ and the bound~\eqref{eq-bound-filter-chi} of $1-\phi$, we get for the second term
\[
\normbig{\text{term~\eqref{eq-local-error-y-3}}}_{s+1-\al} \le Ch^{2+\al}.
\]
In order to estimate the last term~\eqref{eq-local-error-y-2}, we use $\abs{\sinc^2(\xi)-1} \le \xi^{\al}$, the bounds~\eqref{eq-bound-filter-phi} and~\eqref{eq-bound-filter-psi2} on $\phi$ and $1-\psi$, respectively, and the estimate~\eqref{eq-nonlinearity} from Proposition~\ref{prop-nonlinearity} with $\si'=s+1$ to get
\[
\normbig{\text{term~\eqref{eq-local-error-y-2}}}_{s+1-\al} \le C h^{2+\al}.
\]

(b) For the proof of the bound of $\dot{y}(t_1)-\dot{y}^1$ in the norm $\norm{\cdot}_{s-\al}$ we proceed similarly as in the proof of Proposition~\ref{prop-local-uncond}. We split this error again as in~\eqref{eq-local-error-doty-split}. The terms~\eqref{eq-local-error-doty-1}, \eqref{eq-local-error-doty-3} and~\eqref{eq-local-error-doty-4} are estimated in the same way as in the proof of that proposition, with the only difference that Lemma~\ref{lemma-reg} is applied with $\al=0$ instead of the $\al$ under consideration. For the quadrature error~\eqref{eq-local-error-doty-2}, we use
\[
\normbig{ \text{term~\eqref{eq-local-error-doty-2}}}_{s-\al} \le h^\al \normbig{ \text{term~\eqref{eq-local-error-doty-2}}}_{s} + h^{-1+\al} \normbig{ \text{term~\eqref{eq-local-error-doty-2}}}_{s-1}
\]
since $1\le \xi^\al + \xi^{-1+\al}$ for $\xi>0$. From the proof of Proposition~\ref{prop-local-uncond} we already know that $\norm{ \text{term~\eqref{eq-local-error-doty-2}}}_{s} \le C h^2$.
With the second-order Peano kernel $K_2(\si) = \frac12 \si (\si-1)$ of the trapezoidal rule we further get
\[
\normbig{ \text{term~\eqref{eq-local-error-doty-2}}}_{s-1} = h^3 \normbigg{\int_0^1 K_2(\si) \frac{\drm^2}{\drm t^2} f\klabig{y(t_0+\si h)} \,\drm\si}_{s-1} \le C h^3,
\]
where we have used~\eqref{eq-deriv2-nonlinearity} from Proposition~\ref{prop-nonlinearity2} in the last estimate together with the fact that $\ddot{y}=-\Om^2y+f(y)$ is bounded in the norm $\norm{\cdot}_{s-1}$. This yields 
\[
\normbig{\text{term~\eqref{eq-local-error-doty-2}}}_{s-\al} \le C h^{2+\al},
\]
and the proof of the proposition is complete.
\end{proof}

\begin{proposition}[Conditional stability in $H^{s+1-\al}\times H^{s-\al}$ for $0< \al\le 1$]\label{prop-stab-cond}
Let $s\ge 0$ and $0< \al\le 1$, and assume that the filter functions satisfy Assumption~\ref{assum-filter} for $\be=0$ with constant $c$.
We consider the trigonometric integrator~\eqref{eq-trigo} with different initial values $(y^0,\dot{y}^0)$ and $(z^0,\dot{z}^0)$. If 
\[
\normvbig{\bigl(y^0,\dot{y}^0\bigr)}_{s}\le M \myand \normvbig{\bigl(z^0,\dot{z}^0\bigr)}_{s}\le M,
\]
then 
\[
\normvbig{\bigl(y^{1},\dot{y}^1\bigr) - \bigl(z^{1},\dot{z}^1\bigr)}_{s-\al} \le  \bigl( 1 + C h \bigr) \normvbig{\bigl(y^{0},\dot{y}^0\bigr) - \bigl(z^{0},\dot{z}^0\bigr)}_{s-\al}
\]
with a constant $C$ depending on $M$, $s$, $p$ and $c$. 
\end{proposition}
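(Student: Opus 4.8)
The plan is to run the proof of Proposition~\ref{prop-stab-uncond} essentially unchanged, the only modifications being those forced by $\al>0$ and by the fact that only the $\be=0$ instance of Assumption~\ref{assum-filter} is assumed here. As there, I would start from the identity~\eqref{eq-normR} for the action of $R(h)$ in the norm $\normv{\cdot}_{\si}$ and from the induced decomposition~\eqref{eq-stab} of $\normvbig{\klabig{y^{1},\dot{y}^1}-\klabig{z^{1},\dot{z}^1}}_{s-\al}$ into the leading term $\normvbig{\klabig{y^0,\dot{y}^0}-\klabig{z^0,\dot{z}^0}}_{s-\al}$ and the four remainder terms~\eqref{eq-stab0}--\eqref{eq-stab3}; throughout, $h\le 1$, consistently with the rest of Subsection~\ref{subsec-proof1}. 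The filter bounds now at our disposal are $\abs{\phi(\xi)}\le c$ from~\eqref{eq-bound-filter-phi}, $\abs{\psi(\xi)}\le c$ from~\eqref{eq-bound-filter-psi1} read with $\be=0$, and $\abs{1-\chi(\xi)}\le c\,\xi$ for $\chi=\phi,\psi_0,\psi_1$ from~\eqref{eq-bound-filter-chi} read with $\be=0$.

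Term~\eqref{eq-stab0} is bounded by $h\,\norm{\dot{y}^0-\dot{z}^0}_{s-\al}$ as before, with no filter involved. For term~\eqref{eq-stab1} I would use $\abs{\psi(\xi)}\le c$, the bound~\eqref{eq-bound-filter-phi} of $\phi$ (which keeps $\Phi y^0$, $\Phi z^0$ in the ball where Proposition~\ref{prop-nonlinearity} applies), and the Lipschitz estimate~\eqref{eq-lipschitz} with $\si=s+1-\al$, $\si'=s+1$ — admissible since $0<\al\le 1\le s+1$ gives $\si'\ge\abs{\si}$ and $\si'\ge 1$ — to get the bound $Ch^{2}\,\norm{y^0-z^0}_{s+1-\al}$. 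For term~\eqref{eq-stab2} I would split $\Psi_0=\Id+(\Psi_0-\Id)$: the $\Id$-part is handled by~\eqref{eq-lipschitz} with $\si=s-\al$, $\si'=s+1$, while in the $(\Psi_0-\Id)$-part the factor $\xi=h\om_j$ coming from $\abs{1-\psi_0(\xi)}\le c\,\xi$ is written as $h\cdot h\Om$, so that one power of $\Om$ promotes the $H^{s-\al}$-norm to an $H^{s+1-\al}$-norm at the price of a factor $h$, after which~\eqref{eq-lipschitz} with $\si=s+1-\al$, $\si'=s+1$ applies again. This yields the bound $Ch\,\norm{y^0-z^0}_{s+1-\al}$ for term~\eqref{eq-stab2}, hence $Ch\,\normvbig{\klabig{y^0,\dot{y}^0}-\klabig{z^0,\dot{z}^0}}_{s-\al}$; and likewise for term~\eqref{eq-stab1}.

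Term~\eqref{eq-stab3}, which carries the implicit part $\sfrac12 h\,\Psi_1 f(\Phi y^1)$, is the crux and needs two ingredients. First, a uniform $H^{s+1}$-bound on the one-step iterates: I would deduce $\norm{y^1}_{s+1},\,\norm{z^1}_{s+1}\le C$ from Lemma~\ref{lemma-reg} applied with $\be=0$ — legitimate because Assumption~\ref{assum-filter} for $\be=0$ is exactly the hypothesis here, whereas the $\be=\al$ version is \emph{not} assumed — so that~\eqref{eq-lipschitz} can be invoked for $f(\Phi y^1)-f(\Phi z^1)$; then term~\eqref{eq-stab3} is estimated exactly like term~\eqref{eq-stab2}, giving the bound $Ch\,\norm{y^1-z^1}_{s+1-\al}$. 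Second, $\norm{y^1-z^1}_{s+1-\al}$ must be traced back to the initial difference: reading off the first block of~\eqref{eq-trigo}, $y^1-z^1=\cos(h\Om)(y^0-z^0)+h\sinc(h\Om)(\dot{y}^0-\dot{z}^0)+\sfrac12 h^2\Psi\klabig{f(\Phi y^0)-f(\Phi z^0)}$, so that with $\abs{\cos(\xi)}\le 1$, $\abs{\sinc(\xi)}\le\xi^{-1}$ for $\xi>0$ (the $j=0$ component of $h\sinc(h\Om)$ being just $h\le 1$), and the bound on term~\eqref{eq-stab1} already obtained, one gets $\norm{y^1-z^1}_{s+1-\al}\le C\,\normvbig{\klabig{y^0,\dot{y}^0}-\klabig{z^0,\dot{z}^0}}_{s-\al}$. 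Collecting the leading term and the four remainders, the coefficient of $\normvbig{\klabig{y^0,\dot{y}^0}-\klabig{z^0,\dot{z}^0}}_{s-\al}$ is $1$ from the $R(h)$-part plus a remainder of size $Ch$, which is the assertion.

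I expect the main obstacle to be precisely the treatment of term~\eqref{eq-stab3}: it forces one to simultaneously produce a uniform $H^{s+1}$-bound on $y^1$ and $z^1$ — hence the detour through Lemma~\ref{lemma-reg}, and the care needed to invoke it with $\be=0$ rather than with the $\be=\al$ that is not available — and to control $y^1-z^1$ in $H^{s+1-\al}$ by the initial difference in $\normv{\cdot}_{s-\al}$. A secondary point is that, with only the $\be=0$ version of Assumption~\ref{assum-filter}, the filters $\psi$ and $1-\psi_0,\,1-\psi_1$ are bounded merely by $c$ and by $c\,\xi$ rather than by the sharper $c\,\xi^{\al}$ and $c\,\xi^{1+\al}$ of the $\be=\al$ case; the resulting extra power of $\om_j$ has to be absorbed into one Sobolev order together with a factor $h$, which is possible exactly because $0<\al\le 1\le s+1$ renders $f$ Lipschitz from $H^{s+1}$ into $H^{s+1-\al}$ via Proposition~\ref{prop-nonlinearity}.
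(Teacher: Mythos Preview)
Your proposal is correct and follows essentially the same route as the paper's proof: you start from the decomposition~\eqref{eq-stab}, bound terms~\eqref{eq-stab1} and~\eqref{eq-stab2} using Assumption~\ref{assum-filter} with $\be=0$ and the Lipschitz estimate~\eqref{eq-lipschitz} with $\si'=s+1$ and $\si\in\{s-\al,\,s+1-\al\}$, invoke Lemma~\ref{lemma-reg} with $\al=0$ (not ``$\be=0$'' --- the lemma is parameterised by $\al$) to get the $H^{s+1}$-bound on $y^1,z^1$ needed for term~\eqref{eq-stab3}, and finally trace $\norm{y^1-z^1}_{s+1-\al}$ back to the initial difference via the first line of~\eqref{eq-trigo}, exactly as in the proof of Proposition~\ref{prop-stab-uncond}. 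The only cosmetic difference is that the paper keeps the two contributions $Ch\,\norm{y^0-z^0}_{s-\al}+Ch^2\,\norm{y^0-z^0}_{s+1-\al}$ to term~\eqref{eq-stab2} separate, whereas you merge them via $\norm{\cdot}_{s-\al}\le\norm{\cdot}_{s+1-\al}$; both lead to the same conclusion.
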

\begin{proof}
As in the proof of Proposition~\ref{prop-stab-uncond}, we start from~\eqref{eq-stab}. Using~\eqref{eq-bound-filter-psi1} with $\be=0$, we estimate as in that proof
\[
\text{term~\eqref{eq-stab1}} \le C h^2 \normbig{y^0-z^0}_{s+1-\al}.
\]
Similarly, we get, using~\eqref{eq-bound-filter-chi} with $\be=0$, that
\[
\text{term~\eqref{eq-stab2}} \le C h \normbig{y^0-z^0}_{s-\al} + C h^2 \normbig{y^0-z^0}_{s+1-\al}.
\]
The same estimate holds for the term~\eqref{eq-stab3} with $y^1$ and $z^1$ on the right-hand side instead of $y^0$ and $z^0$, respectively, if we use in addition Lemma~\ref{lemma-reg} with $\al=0$. We can then argue as in the proof of Proposition~\ref{prop-stab-uncond} to replace $y^1$ and $z^1$ on the right-hand side by $y^0$ and $z^0$. This completes the proof of the stability estimate.
\end{proof}

The stability result of the previous proposition is a \emph{conditional} stability result, since it requires regularity in a higher Sobolev space than the one in which stability is shown. In the following proof of Theorem~\ref{thm-main} for $0<\al\le1$, we can afford this higher regularity of the numerical solution, since our analysis of the previous subsection implies this regularity, see in particular~\eqref{eq-regularity}. Nevertheless, we mention that there are some special cases in which the above conditional stability result can be turned into an unconditional stability result, for example for $s\ge 1$ (or even $s>\frac12$) by virtue of~\eqref{eq-lipschitz} from Proposition~\ref{prop-nonlinearity}, or for $p=2$ by virtue of part (ii) of Proposition~\ref{prop-algebra} and a slightly stronger assumption on $\psi$.

\begin{proof}[Proof of Theorem~\ref{thm-main} for $0<\al\le1$]
The proof is the same as the one for $-1\le\al<0$ in the previous subsection. Of central importance is the fact that we know from the analysis there that the numerical solution is bounded in $H^{s+1}\times H^s$, see~\eqref{eq-regularity}. Together with the boundedness~\eqref{eq-finiteenergy} of the exact solution in $H^{s+1}\times H^s$, this ensures that the regularity assumptions for the stability estimate of Proposition~\ref{prop-stab-cond} are fulfilled.
\end{proof}

\subsection{On the use of a filter inside the nonlinearity}\label{subsec-useoffilter}

After having completed the proof of Theorem~\ref{thm-main} in the previous subsection, we comment in this subsection on the filter $\Phi$ and give an outline of a slightly different proof of Theorem~\ref{thm-main}. 

We consider the trigonometric integrator~\eqref{eq-trigo}, which uses a filter $\Phi$ inside the nonlinearity $f$, applied to~\eqref{eq-nlw-semi}. This method can be written as
\begin{equation}\label{eq-trigo-mod}
\begin{pmatrix} z^{n+1}\\ \dot{z}^{n+1}\end{pmatrix} = R(h) \begin{pmatrix} z^n\\ \dot{z}^n\end{pmatrix} + \begin{pmatrix} \sfrac12 h^2 \Psi \widetilde{f}(z^n)\\ \sfrac12 h \Psi_0 \widetilde{f}(z^n) + \sfrac12 h \Psi_1 \widetilde{f}(z^{n+1})\end{pmatrix}
\end{equation}
with 
\begin{equation}\label{eq-useoffilter-aux}
\klabig{ z^n,\dot{z}^n } = \klabig{ y^n,\dot{y}^n }
\end{equation}
and the modified nonlinearity
\[
\widetilde{f}(z) = f \klabig{ \Phi z }.
\]
This is a trigonometric integrator, with filters $\Psi$, $\Psi_0$ and $\Psi_1$ but no filter inside the nonlinearity,
applied to the system
\[
\ddot{z}(t) = -\Om^2 z(t) + \widetilde{f} \klabig{ z(t) }, \qquad z(t_0) = y(t_0), \quad \dot{z}(t_0) = \dot{y}(t_0).
\]

On the other hand, we have, under the assumptions~\eqref{eq-bound-filter-phi} and~\eqref{eq-bound-filter-chi} on $\phi$ with $\be=0$ and $\be =\al$ and under the assumption~\eqref{eq-finiteenergy} on $(y(t),\dot{y}(t))$, that
\[
\normvbig{ \klabig{ y(t)-z(t), \dot{y}(t)-\dot{z}(t) } }_{s-\al} \le C h^{1+\al} \myfor 0\le t-t_0\le T
\]
for $-1\le \al\le 1$. 
Instead of giving the full details here, we only mention that this estimate can be shown with the arguments used in the proofs of the stability estimates of Propositions~\ref{prop-stab-uncond} and~\ref{prop-stab-cond} and with the Gronwall inequality applied to the variation-of-constants formula~\eqref{eq-voc} for $(y-z,\dot{y}-\dot{z})$ together with a bootstrap argument; again, one has to consider first the case $\al=0$ and then the case of a general $\al$. 
From~\eqref{eq-useoffilter-aux}, we then infer
\[
\absBig{ \normvbig{ \klabig{ y(t_n)-y^n, \dot{y}(t_n)-\dot{y}^n } }_{s-\al} - \normvbig{ \klabig{ z(t_n)-z^n, \dot{z}(t_n)-\dot{z}^n } }_{s-\al} } \le C h^{1+\al}.
\]

Hence, the trigonometric integrator~\eqref{eq-trigo} with filters $\Psi$, $\Psi_0$, $\Psi_1$ and $\Phi$ is in $H^{s+1-\al}\times H^{s-\al}$ of order $1+\al$ if and only if the same holds for the trigonometric integrator~\eqref{eq-trigo-mod} with filters $\Psi$, $\Psi_0$, $\Psi_1$ and $\Id$. 
This shows that it would be sufficient to consider the case $\Phi=\Id$ in the proof of Theorem~\ref{thm-main}. It also shows that the filter $\Phi$ is not important for the sake of proving such error bounds. This latter conclusion does not hold for Gautschi-type methods, for which numerical experiments suggest that a suitably chosen filter $\Phi$ is necessary to have optimal temporal error bounds. This latter conclusion neither holds for the equations considered in~\cite{GarciaArchilla1999,Grimm2006,Hairer2006}.

\section{Extensions}\label{sec-ext}

Revisiting the proof of Theorem~\ref{thm-main} as given in the previous section shows that only the following properties of the diagonal matrix $\Om=\diag(\om_j)_{j\in\disc}$ and the nonlinearity $f$ in~\eqref{eq-nlw-semi} are needed.
\begin{itemize}
\item The frequencies $\om_j$ behave like $\abs{j}$: there exist positive constants $c_1$ and $c_2$ such that
\begin{equation}\label{eq-om-cond}
c_1 \abs{j} \le \om_j \le c_2 (1+\abs{j}), \qquad j\in\disc.
\end{equation}
\item The nonlinearity has the properties of Propositions~\ref{prop-nonlinearity} and~\ref{prop-nonlinearity2}.
\end{itemize}
The norm $(\sum_{j\in\disc} \max(\om_j,\om_{\min})^{2 s} \abs{y_j}^{2})^{1/2}$, where $\om_{\min}$ denotes the minimal nonzero frequency, is then equivalent to the norm $\norm{\cdot}_s$, and the proof of Theorem~\ref{thm-main} transfers with this norm to such situations. 
The statement of Theorem~\ref{thm-main} thus holds (with constants depending in addition on $c_1$ and $c_2$ from~\eqref{eq-om-cond}) for trigonometric integrators applied to general equations of the form~\eqref{eq-nlw-semi} that satisfy these two conditions. We illustrate this on some examples.

\subsection{Error bounds for more general nonlinearities}\label{subsec-generalnonlinear}

Let $g\colon\C\rightarrow\C$ be an analytic function with $g(0)=0$ and $g'(0)\le0$, given by
\[
g(x) = \sum_{m=1}^\infty a_m x^m.
\]
We consider the nonlinear wave equation
\begin{equation}\label{eq-nlw-analytic}
u_{tt} - u_{xx} = g(u), \qquad u=u(x,t)
\end{equation}
with this nonlinearity. This includes the pure power nonlinear wave equation~\eqref{eq-nlw} that we have considered so far ($g(x)=x^p$), but also the nonlinear Klein--Gordon equation
\[
u_{tt} - u_{xx} + \rho u = u^p, \qquad \rho>0,
\]
where $g(x) = -\rho x + x^p$, and the Sine--Gordon equation
\[
u_{tt} - u_{xx} = -\sin(u),
\]
where $g(x)=-\sin(x)$.

The discretization in space of this equation by spectral collocation can be done in the same way as in Subsection~\ref{subsec-collocation}. This leads to an equation of the form~\eqref{eq-nlw-semi} with the frequencies
\[
\om_j = \sqrt{j^2 - g'(0)}
\]
and the nonlinearity
\[
f(y) = \sum_{m=2}^\infty a_m \bigl(\underbrace{y\conv \dots\conv y}_{m\text{ times}}\bigr).
\]
The new frequencies $\om_j$ satisfy~\eqref{eq-om-cond} with $c_1=1$ and $c_2=1-g'(0)$. The analyticity of $g$ then allows us to extend Propositions~\ref{prop-nonlinearity} and~\ref{prop-nonlinearity2} from pure power nonlinearities of the form $y\conv \dotsm \conv y$ to the above nonlinearity $f$. 

Hence, the error bounds of Theorem~\ref{thm-main} extend to trigonometric integrators applied to the spectral semi-discretization in space of the more general nonlinear wave equation~\eqref{eq-nlw-analytic} instead of~\eqref{eq-nlw}. Similarly, one can consider nonlinear wave equations of the form $u_{tt} - u_{xx} = g(\abs{u}^2) u$ with complex valued solutions.

\subsection{Error bounds for the spatial semi-discretization by finite differences}\label{subsec-findiff}

For the spatial discretization by finite differences (instead of spectral collocation), one replaces the derivative $u_{xx}(x,t)$ in the nonlinear wave equation~\eqref{eq-nlw} by the difference
\[
\frac{u(x+\deltax,t) - 2u(x,t) + u(x-\deltax,t)}{(\deltax)^2} \with \deltax=\frac{\pi}{K}.
\]
Then one inserts the points $x_k=\pi k/K$ in the equation. 

As in the case of the spectral collocation method of Subsection~\ref{subsec-collocation}, we define the vector $y=(y_j)_{j\in\disc}$ by $u(x_k,t)=\sum_{j\in\disc} y_j(t)\e^{\iu jx_k}$, $k\in\disc$. This then leads again to a system of the form~\eqref{eq-nlw-semi} with exactly the same nonlinearity as in Subsection~\ref{subsec-collocation}. The only difference compared to~\eqref{eq-nlw-semi} is that the frequencies $\om_j$ now read
\[
\om_j = \frac{2}{\deltax} \, \absBig{\sin\Bigl(\frac{j\deltax}{2}\Bigr)}.
\]
These frequencies satisfy~\eqref{eq-om-cond} with $c_1=2/\pi$ and $c_2=1$. 

Theorem~\ref{thm-main} thus also holds if the spatial semi-discretization by finite differences instead of spectral collocation is considered. It is interesting to observe that the finite difference semi-discretization in space requires higher regularity assumptions on the exact solution for convergence than the semi-discretization in time by trigonometric integrators.

\subsection{Error bounds for the St\"ormer--Verlet/leapfrog discretization in time}\label{subsec-sv}

The popular St\"ormer--Verlet/leapfrog discretization in time of the spatially discrete wave equation~\eqref{eq-nlw-semi} reads 
\begin{equation}\label{eq-sv}
y^{n+1} - 2y^n + y^{n-1} = -h^2\Om^2 y^n + h^2 f(y^n)
\end{equation}
with starting approximation $y^1 = y^0 + h \dot{y}^0 - \sfrac12 h^2\Om^2 y^0 + \sfrac12 h^2 f(y^0)$ and velocity approximation $2h \dot{y}^n = y^{n+1} - y^{n-1}$, see, for instance, \cite[Section~XIII.8]{Hairer2006}. 

Under the CFL-type step-size restriction $h\om_j< 2$ for all $j\in\disc$, i.e., $hK< 2$,
this method can be interpreted as a trigonometric integrator for an equation with modified frequencies, see again~\cite[Section~XIII.8]{Hairer2006}. Indeed, under this step-size restriction, one can introduce modified frequencies $0\le \widetilde{\om}_j< h^{-1}\pi$ by
\[
\widetilde{\Om} = \diag(\widetilde{\om}_j)_{j\in\disc} \with \cos\klabig{h\widetilde{\om}_j} = 1-\sfrac12 h^2\om_j^2
\]
and modified velocities
\[
\dot{\widetilde{y}} = \sinc\klabig{h\widetilde{\Om}}^{-1} \dot{y}.
\]
The St\"ormer--Verlet/leapfrog discretization~\eqref{eq-sv} then takes the form
\begin{equation}\label{eq-sv-trigo}
\begin{pmatrix} y^{n+1}\\ \dot{\widetilde{y}}^{n+1}\end{pmatrix} = \widetilde{R}(h) \begin{pmatrix} y^n\\ \dot{\widetilde{y}}^n\end{pmatrix} + \begin{pmatrix} \sfrac12 h^2 \Psi f(\Phi y^n)\\ \sfrac12 h \Psi_0 f(\Phi y^n) + \sfrac12 h \Psi_1 f(\Phi y^{n+1})\end{pmatrix},
\end{equation}
where $\widetilde{R}$ is the resolvent $R$ of~\eqref{eq-R} but with the modified frequencies $\widetilde{\Om}$ instead of $\Om$, and where
\begin{equation}\label{eq-sv-filter}
\Phi = \Psi = \Id, \qquad \Psi_0 = \cos\klabig{h\widetilde{\Om}} \sinc\klabig{h\widetilde{\Om}}^{-1}, \qquad \Psi_1 = \sinc\klabig{h\widetilde{\Om}}^{-1}.
\end{equation}
In this sense, the St\"ormer--Verlet/leapfrog discretization~\eqref{eq-sv} can be considered as a trigonometric integrator applied to the system
\begin{equation}\label{eq-svmod}
\ddot{z}(t) = -\widetilde{\Om}^2 z(t) + f \klabig{ z(t) }, \qquad z(t_0) = y(t_0), \quad \dot{z}(t_0) = \dot{\widetilde{y}}^0 = \sinc(h\widetilde{\Om}^{-1}) \dot{y}(t_0).
\end{equation}
This leads to the following convergence result.

\begin{theorem}\label{thm-sv}
Let $s\ge 0$ and $-1\le\al\le \min(1,\sfrac23 s+\sfrac13)$, and assume that the exact solution $(y(t),\dot{y}(t))$ of the spatial semi-discretization~\eqref{eq-nlw-semi} of the nonlinear wave equation~\eqref{eq-nlw} as well as the exact solution $(z(t),\dot{z}(t))$ of the equation~\eqref{eq-svmod} with modified frequencies and modified initial values both satisfy the finite energy assumption~\eqref{eq-finiteenergy} of Theorem~\ref{thm-main}. 

Then, there exists $h_0>0$ such that for all time step-sizes $h\le h_0$ that fulfill the step-size restriction
\begin{equation}\label{eq-cfl}
h K \le c_0 < 2,
\end{equation}
the following error bound holds for the numerical solution $(y^n,\dot{y}^n)$ computed with the St\"ormer--Verlet/leapfrog method~\eqref{eq-sv}:
\[
\normbig{y(t_n) - y^n}_{s+1-3(1+\al)/2} + \normbig{\dot{y}(t_n) - \dot{y}^n}_{s-3(1+\al)/2} \le C h^{1+\al} \quad\text{for}\quad 0\le t_n-t_0 \le T. 
\]
The constants $C$ and $h_0$ depend only on $M$ and $s$ from~\eqref{eq-finiteenergy}, the power $p$ of the nonlinearity in~\eqref{eq-nlw}, the final time $T$ and the constant $c_0$ from~\eqref{eq-cfl}.
\end{theorem}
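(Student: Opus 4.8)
The plan is to exploit the identification, recalled just above, of the St\"ormer--Verlet/leapfrog method~\eqref{eq-sv} with the trigonometric integrator~\eqref{eq-sv-trigo}--\eqref{eq-sv-filter} applied to the modified system~\eqref{eq-svmod}: I would first apply Theorem~\ref{thm-main} to this modified system to bring the numerical solution close to the exact solution $(z,\dot z)$ of~\eqref{eq-svmod}, and then compare, at the continuous level, $(z,\dot z)$ with the exact solution $(y,\dot y)$ of the original system~\eqref{eq-nlw-semi}, following the two-step scheme of Subsection~\ref{subsec-useoffilter}. The preparatory observations are that, under the restriction~\eqref{eq-cfl}, the modified frequencies satisfy $h\widetilde\om_j=2\arcsin\klabig{\sfrac12 h\om_j}\in[0,2\arcsin(c_0/2)]\subset[0,\pi)$ and $\om_j\le\widetilde\om_j\le\kappa\,\om_j$ with $\kappa=\kappa(c_0)$, so $\widetilde\Om$ satisfies the condition~\eqref{eq-om-cond} of Section~\ref{sec-ext} with constants depending only on $c_0$ and the adapted norm is equivalent to $\norm{\cdot}_s$; and that, since $h\widetilde\om_j$ is bounded away from $\pi$, the filter functions in~\eqref{eq-sv-filter} ($\phi=\psi=1$, and $\psi_0,\psi_1$ analytic at the origin with value $1$, so $\abs{1-\psi_i(\xi)}\le C\xi^2$) satisfy Assumption~\ref{assum-filter} for all $-1\le\be\le1$ with a constant $c=c(c_0)$. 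Since the nonlinearity of~\eqref{eq-svmod} is the same $f$ as in~\eqref{eq-nlw-semi}, the extension at the start of Section~\ref{sec-ext} applies, and (using the hypothesis that $(z,\dot z)$ satisfies~\eqref{eq-finiteenergy}) Theorem~\ref{thm-main} yields, for $-1\le\al\le1$,
\[
\normbig{z(t_n)-y^n}_{s+1-\al}+\normbig{\dot z(t_n)-\sinc(h\widetilde\Om)^{-1}\dot y^n}_{s-\al}\le C h^{1+\al},
\]
with $(y^n,\sinc(h\widetilde\Om)^{-1}\dot y^n)$ the numerical solution of~\eqref{eq-sv-trigo}. Using that $\sinc(h\widetilde\Om)^{\pm1}$ are bounded, that $\abs{1-\sinc(h\widetilde\om_j)^{-1}}\le C\min(1,h^2\om_j^2)$, that $(y^n,\dot y^n)$ is bounded in $H^{s+1}\times H^{s}$ (cf.~\eqref{eq-regularity} for~\eqref{eq-svmod}), and that $s-\al\ge s-\sfrac32(1+\al)$, this already controls $\normbig{z(t_n)-y^n}_{s+1-3(1+\al)/2}+\normbig{\dot z(t_n)-\dot y^n}_{s-3(1+\al)/2}$ by $C h^{1+\al}$.

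It remains to compare the two exact flows, both bounded in $H^{s+1}\times H^{s}$ by hypothesis. With $d=y-z$ one has $\ddot d=-\Om^2 d+(\widetilde\Om^2-\Om^2)z+\klabig{f(y)-f(z)}$, $d(t_0)=0$, $\dot d(t_0)=\klabig{\Id-\sinc(h\widetilde\Om)^{-1}}\dot y(t_0)$; writing $\theta=\sfrac12(1+\al)\in[0,1]$, the target is $\norm{d(t_n)}_{s+1-3\theta}+\norm{\dot d(t_n)}_{s-3\theta}\le C h^{2\theta}$. In the variation-of-constants formula~\eqref{eq-voc} for $(d,\dot d)$ with the resolvent~\eqref{eq-R}, the initial term contributes $O(h^{2\theta})$ even in the stronger space $H^{s+1-2\theta}\times H^{s-2\theta}$, because $\abs{1-\sinc(h\widetilde\om_j)^{-1}}\le C\min(1,h^2\om_j^2)\le C h^{2\theta}\skla{j}^{2\theta}$. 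The feedback $f(y)-f(z)$ is estimated exactly as in Propositions~\ref{prop-stab-uncond}/\ref{prop-stab-cond} and in Subsection~\ref{subsec-useoffilter}, via the Lipschitz estimate~\eqref{eq-lipschitz} of Proposition~\ref{prop-nonlinearity} together with the a~priori bound $\norm{d(t)}_{s+1}\le 2M$; the assumption $-1\le\al\le\min(1,\sfrac23 s+\sfrac13)$ is precisely what keeps all Sobolev exponents appearing in this comparison within the admissible range of Propositions~\ref{prop-algebra} and~\ref{prop-nonlinearity}, in particular it forces $s+1-3\theta\ge-1$ (position error no weaker than $H^{-1}$). As in Subsection~\ref{subsec-useoffilter} one first treats $\al=0$ and then a general $\al$ by a bootstrap, and a Gronwall argument closes the estimate.

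The one genuinely delicate term is the frequency-mismatch source
\[
T_{\mathrm{src}}(t)=\int_{t_0}^{t}R(t-\ta)\begin{pmatrix}0\\ (\widetilde\Om^2-\Om^2)z(\ta)\end{pmatrix}\,\drm\ta ,
\]
and it is here that the loss of $\sfrac32(1+\al)$ derivatives (rather than the $1+\al$ of Theorem~\ref{thm-main}) and the need for the CFL restriction~\eqref{eq-cfl} originate. A crude estimate $\norm{(\widetilde\Om^2-\Om^2)z}_{\si}\le C h^{2\theta}\norm{z}_{\si+2+2\theta}$ makes $T_{\mathrm{src}}$ small only in a far too weak space, so cancellation must be extracted: split $z=z_{\mathrm{lin}}+z_{\mathrm{nl}}$, where $z_{\mathrm{lin}}$ solves the homogeneous equation $\ddot z_{\mathrm{lin}}=-\widetilde\Om^2 z_{\mathrm{lin}}$ with the initial data of~\eqref{eq-svmod}, so that $z_{\mathrm{nl}}=z-z_{\mathrm{lin}}$ is the Duhamel term carrying $f(z)$. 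Since $f(z)\in H^{s+1}$ (Proposition~\ref{prop-nonlinearity}), $z_{\mathrm{nl}}$ is bounded in $H^{s+2}\times H^{s+1}$, so $(\widetilde\Om^2-\Om^2)z_{\mathrm{nl}}$ is $O(h^{2\theta})$ in $H^{s-2\theta}$ and costs only a $2\theta$-derivative loss. For $z_{\mathrm{lin}}$, which oscillates with the modified frequencies $\widetilde\om_j$, one integrates by parts in $\ta$ in $T_{\mathrm{src}}$ (equivalently, one evaluates the scalar integrals $\int\sin((t-\ta)\om_j)\e^{\pm\iu\ta\widetilde\om_j}\,\drm\ta$ explicitly): the near-resonant contributions produce the factor $(\widetilde\om_j^2-\om_j^2)/(\widetilde\om_j-\om_j)=\widetilde\om_j+\om_j\le C\om_j$, which cancels the small denominator $\widetilde\om_j-\om_j$, leaving a multiplier bounded by $C\min\bigl(1,\abs{\cos(t\om_j)-\cos(t\widetilde\om_j)}\bigr)\le C\min(1,h^2\skla{j}^3)\le C h^{2\theta}\skla{j}^{3\theta}$ (using $\abs{\widetilde\om_j-\om_j}\le C h^2\om_j^3$ and $t\le T$), while the non-resonant contributions give the better multiplier $C h^{2\theta}\skla{j}^{2\theta}$. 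This is exactly the dispersion-error mechanism of the leapfrog scheme for the linear wave equation, and it accounts for both the $\sfrac32(1+\al)$ loss and the order $\sfrac23$ observed in $H^0\times H^{-1}$ when $s=0$. Extracting this cancellation so that $T_{\mathrm{src}}$ is controlled in $H^{s+1-3(1+\al)/2}$ instead of in a weaker space is the main obstacle; combining the three contributions gives $\norm{d(t_n)}_{s+1-3\theta}+\norm{\dot d(t_n)}_{s-3\theta}\le C h^{2\theta}=C h^{1+\al}$, and a triangle inequality with the first part completes the proof.
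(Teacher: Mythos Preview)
Your proof is correct and follows the same three-part architecture as the paper: apply Theorem~\ref{thm-main} to the trigonometric-integrator interpretation~\eqref{eq-sv-trigo}--\eqref{eq-sv-filter} of leapfrog on the modified system~\eqref{eq-svmod}, undo the velocity modification, and then compare the two exact flows $(y,\dot y)$ and $(z,\dot z)$. The difference lies entirely in how you carry out this last comparison.

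You write the ODE for $d=y-z$ with forcing $(\widetilde\Om^2-\Om^2)z$, observe that a crude bound on this forcing is too weak, and then recover the missing cancellation by splitting $z=z_{\mathrm{lin}}+z_{\mathrm{nl}}$ and integrating by parts in the linear part of $T_{\mathrm{src}}$. The paper proceeds more directly: it writes $(y,\dot y)$ via the variation-of-constants formula with resolvent $R$, $(z,\dot z)$ via the one with $\widetilde R$, and subtracts. The resulting $(R-\widetilde R)$ terms acting on $(z(t_0),\dot z(t_0))$ and on $(0,f(z))$ are then estimated by the elementary bound $\abs{\cos(t\om_j)-\cos(t\widetilde\om_j)}\le C\min(1,t\abs{\om_j-\widetilde\om_j})\le C\min(1,h^2\om_j^3)\le Ch^{1+\al}\om_j^{3(1+\al)/2}$ (and similarly for $\sin$), obtained by a case split $h^2\om_j^3\gtrless 1$. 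In fact your integration-by-parts identity for $z_{\mathrm{lin}}$ reproduces exactly the paper's $(R-\widetilde R)(t-t_0)(z(t_0),\dot z(t_0))$, and your $z_{\mathrm{nl}}$ contribution is exactly the paper's $\int(R-\widetilde R)(t-\ta)(0,f(z(\ta)))\,\drm\ta$; so the two routes are equivalent, but the paper's avoids the linear/nonlinear decomposition and the resonant/non-resonant discussion altogether. Your treatment of the $z_{\mathrm{nl}}$ piece (exploiting $z_{\mathrm{nl}}\in H^{s+2}$ to lose only $2\theta$ derivatives) is in fact slightly sharper than what the paper records there, though this gain is not needed. Also, no bootstrap ``first $\al=0$, then general $\al$'' is required in this comparison: the Lipschitz estimate~\eqref{eq-lipschitz} applies directly since both $y$ and $z$ are assumed bounded in $H^{s+1}$.
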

\begin{proof}
We decompose the errors as
\begin{align*}
y(t_n) - y^n &= \klabig{y(t_n)-z(t_n)} + \klabig{z(t_n)-y^n},\\
\dot{y}(t_n) - \dot{y}^n &= \klabig{\dot{y}(t_n)-\dot{z}(t_n)} + \klabig{\dot{z}(t_n)-\dot{\widetilde{y}}^n} + \klabig{\dot{\widetilde{y}}^n-\dot{y}^n}
\end{align*}
and estimate the terms separately. By $C$, we denote a generic constant depending only $M$, $s$, $p$, $T$ and $c_0$.

(a) Error of the trigonometric integrator for the modified equation.
By Taylor expansion, we have
\begin{equation}\label{eq-om-omtilde}
h^2 \absbig{\om_j^2-\widetilde{\om}_j^2} \le \sfrac1{12} h^4 \widetilde{\om}_j^4 \myfor j\in\disc.
\end{equation}
Since the modified frequencies satisfy $h\widetilde{\om}_j\le \pi$ for all $j\in\disc$, this implies
\begin{equation}\label{eq-boundomtilde}
c_1 \om_j \le \widetilde{\om}_j \le c_2 \om_j \myfor j\in\disc
\end{equation}
with $c_1 = 1/(1+\pi^2/12)^{1/2}$ and $c_2 = 1/(1-\pi^2/12)^{1/2}$.
This shows that the frequencies of the system~\eqref{eq-svmod} for $(z,\dot{z})$ satisfy~\eqref{eq-om-cond}. Moreover, the step-size restriction~\eqref{eq-cfl} ensures that $h\widetilde{\om}_j$ is bounded away from $\pi$, and hence Assumption~\ref{assum-filter} on the filter functions holds for the filters~\eqref{eq-sv-filter} for all $-1\le\be\le 1$ with a constant $c$ depending only on $c_0$. We may thus apply Theorem~\ref{thm-main} to the trigonometric integrator~\eqref{eq-sv-trigo} applied to~\eqref{eq-svmod}. This shows that
\begin{equation}\label{eq-sv-errortrigo}
\normvbig{\klabig{z(t_n)-y^n,\dot{z}(t_n)-\dot{\widetilde{y}}^n}}_{s-\al} \le C h^{1+\al} \myfor 0\le t_n-t_0\le T,
\end{equation}
where we use the norm $\normv{\cdot}_{\si}$ of Subsection~\ref{subsec-proof1}.

(b) Error from modifying the velocities.
From the error bound~\eqref{eq-sv-errortrigo} we get $\norm{\dot{\widetilde{y}}^n}_{s}\le C$, and from~\eqref{eq-boundomtilde} we get $\abs{1-\sinc(h\widetilde{\om}_j)} \le C h^{1+\al} \om_j^{1+\al}$. This shows that
\begin{equation}\label{eq-sv-errorvelocity}
\normbig{ \dot{\widetilde{y}}^n - \dot{y}^n }_{s-1-\al} \le C h^{1+\al}.
\end{equation}

(c) Error from modifying the frequencies and initial values.
The solution $(z,\dot{z})$ of~\eqref{eq-svmod} can be expressed by the same variation-of-constants formula~\eqref{eq-voc} as the solution $(y,\dot{y})$ of~\eqref{eq-nlw-semi}, but with $\widetilde{R}$ instead of $R$ (and $z$ instead of $y$, of course). Subtracting these formulas gives
\begin{subequations}
\begin{align}
\begin{pmatrix} y(t)-z(t)\\ \dot{y}(t)-\dot{z}(t) \end{pmatrix} &= 
R(t-t_0) \begin{pmatrix} y(t_0)-z(t_0)\\ \dot{y}(t_0)-\dot{z}(t_0) \end{pmatrix}\label{eq-proof-sv-aux1}\\
&\qquad + \klabig{ R(t-t_0) - \widetilde{R}(t-t_0) } \begin{pmatrix} z(t_0)\\ \dot{z}(t_0) \end{pmatrix}\label{eq-proof-sv-aux2}\\
&\qquad + \int_{t_0}^t R(t-\ta) \begin{pmatrix} 0\\ f(y(\ta)) - f(z(\ta)) \end{pmatrix} \,\drm\ta\label{eq-proof-sv-aux3}\\
&\qquad + \int_{t_0}^t \klabig{ R(t-\ta) - \widetilde{R}(t-\ta) } \begin{pmatrix} 0\\ f(z(\ta)) \end{pmatrix} \,\drm\ta.\label{eq-proof-sv-aux4}
\end{align}
\end{subequations}
We estimate the terms on the right-hand side separately. 
Form the fact that $R$ almost preserves the norm $\normv{\cdot}_{\si}$ (see~\eqref{eq-normR}) and from~\eqref{eq-sv-errorvelocity}, we get
\[
\normvbig{\text{term on right-hand side of~\eqref{eq-proof-sv-aux1}}}_{s-1-\al} \le C h^{1+\al}.
\]
Similarly, we get
\[
\normvbig{\text{term~\eqref{eq-proof-sv-aux3}}}_{s+1-3(1+\al)/2} \le C \int_{t_0}^t \norm{y(\ta)-z(\ta)}_{s+1-3/2(1+\al)} \,\drm\ta,
\]
where we have used in addition~\eqref{eq-lipschitz} from Proposition~\ref{prop-nonlinearity} with $\si=s+1-3(1+\al)/2$ and $\si'=s+1$ (note that $\si\ge -1$ since we assume that $\al\le 2s/3+1/3$). 
In order to estimate the terms~\eqref{eq-proof-sv-aux2} and~\eqref{eq-proof-sv-aux4}, we study $R(t) - \widetilde{R}(t)$ for $0\le t\le T$. Using the trigonometric identity $\cos(a) - \cos(b) = 2 \sin((a+b)/2) \sin((b-a)/2)$ and $\abs{\sin((a+b)/2)}\le 1$, we get
\[
\absbig{\cos(t\om_j) - \cos(t\widetilde{\om}_j)}  \le C h^{1+\al} \, \om_j^{3(1+\al)/2} \myfor j\in\disc,
\]
where we have distinguished between $1\le h^2\om_j^3$ and $h^2\om_j^3\le1$; in the first case, we use $\abs{\sin((b-a)/2)}\le 1$ and $ 1 \le h^{1+\al} \, \om_j^{3(1+\al)/2}$, whereas we use $\abs{\sin((b-a)/2)}\le \abs{b-a}$, $\abs{\om_j-\widetilde{\om}_j}\le C h^2\om_j^3$ by~\eqref{eq-om-omtilde} and~\eqref{eq-boundomtilde} and $h^{1-\al} \, \om_j^{3(1-\al)/2}\le 1$ in the second case. Similarly, we get with $\sin(a) - \sin(b) = 2 \cos((a+b)/2) \sin((a-b)/2)$ that 
\[
\absbig{\sin(t\om_j) - \sin(t\widetilde{\om}_j)}  \le C h^{1+\al} \, \om_j^{3(1+\al)/2} \myfor j\in\disc.
\]
Using $h\om_j\le 2$, we also obtain from~\eqref{eq-om-omtilde} and~\eqref{eq-boundomtilde} that
\[
\absbigg{\frac{\om_j-\widetilde{\om}_j}{\om_j}} \le C h^{1+\al} \, \om_j^{1+\al} \myfor j\in\disc.
\]
These estimates show that
\[
\normvbig{\text{term~\eqref{eq-proof-sv-aux2}}}_{s-3(1+\al)/2} \le C h^{1+\al}
\]
since $\normv{(z(t_0),\dot{z}(t_0))}_s\le M$,
and similarly that
\[
\normvbig{\text{term~\eqref{eq-proof-sv-aux4}}}_{s+1-3(1+\al)/2} \le C h^{1+\al},
\]
since $\normv{(0,f(z(\ta)))}_{s+1} \le M$ by~\eqref{eq-nonlinearity} from Proposition~\ref{prop-nonlinearity} with $\si=\si'=s+1$. Taking the estimates of the different terms~\eqref{eq-proof-sv-aux1}--\eqref{eq-proof-sv-aux4} together shows that, for $0\le t-t_0\le T$,
\[
\normvbig{\klabig{y(t)-z(t),\dot{y}(t)-\dot{z}(t)}}_{s-3(1+\al)/2} \le C h^{1+\al} + C \int_{t_0}^t \norm{y(\ta)-z(\ta)}_{s+1-3/2(1+\al)} \,\drm\ta.
\]
The Gronwall inequality then implies a bound by $Ch^{1+\al}$ of the difference $(y(t)-z(t),\dot{y}(t)-\dot{z}(t))$ in $H^{s+1-3(1+\al)/2}\times H^{s-3(1+\al)/2}$. Together with the estimates~\eqref{eq-sv-errortrigo} and~\eqref{eq-sv-errorvelocity} of parts (a) and (b) of the proof, respectively, this completes the proof of the theorem.
\end{proof}

For $s=0$, for example, the above theorem gives for the St\"ormer--Verlet/leapfrog discretization uniform convergence of order $2/3$ in $H^0\times H^{-1}$ (with $\al = -1/3$). This order of convergence has also been observed in the numerical experiment of Subsection~\ref{subsec-numexp}, see Figure~\ref{fig-stoermerverlet}. This is in striking contrast to trigonometric integrators that are in this situation second-order convergent, see Theorem~\ref{thm-main}.
In comparison with trigonometric integrators, the St\"ormer--Verlet/leapfrog discretization in time thus not only requires the CFL-type step-size restriction~\eqref{eq-cfl}, but it also converges only in Sobolev spaces of comparatively low order.

\section{Conclusion}\label{sec-conclusion}

An error analysis of trigonometric integrators applied to spatial semi-discretizations of some semilinear wave equations has been given. The analysis is uniform in the spatial discretization parameter, and it extends in a straightforward way to the spatially continuous semi-discretization in time by trigonometric integrators. In contrast to previous works on error bounds for these integrators, the presented analysis takes care and makes use of the structure of nonlinearity in the scale of Sobolev spaces. 

The flexibility of the presented error analysis has been illustrated by its extension to more general nonlinearities, to spatial semi-discretizations by finite differences and to the St\"ormer--Verlet/leapfrog discretization in time.
Likewise, we expect that an extension to multiple space dimensions is possible. Challenging problems for future work are the study of related questions in the case of quasilinear wave equations and the explanation of the remarkably good behaviour of Deuflhard's method that we have observed in numerical experiments.

\subsection*{Acknowledgement}

I thank Christian Lubich (Universit\"at T\"ubingen) for pointing out that the St\"ormer--Verlet/leapfrog discretization is covered by the presented error analysis, which led to Subsection~\ref{subsec-sv}. 
This work was partially supported by DFG project GA 2073/2-1.

\end{document}